\newcommand{\diag}{\mathrm{diag}}
\newcommand{\ud}{\, \mathrm{d}}  
\newcommand{\ue}{e}
\newcommand{\vligne}[1]{\begin{bmatrix} #1 \end{bmatrix}}
\newtheorem{defn}{Definition}[section]
\newtheorem{lem}[defn]{Lemma}
\newtheorem{theo}[defn]{Theorem}
\newtheorem{rem}[defn]{Remark}
\newcommand{\bs}{\boldsymbol}
\newenvironment{proof}{
      \noindent {\bf Proof }}{\qed
      \vspace{0.25\baselineskip}
}
\newcommand{\debproof}{\begin{proof}}
\newcommand{\finproof}{\end{proof}}
\definecolor{darkmagenta}{rgb}{0.5,0,0.5}
\definecolor{hotpink}{rgb}{1,0.2,1}
\definecolor{darkgreen}{rgb}{0,0.5,0}
\definecolor{darkblue}{rgb}{0,0,0.85}
\definecolor{darkred}{rgb}{0.8,0,0}
\definecolor{mellow}{rgb}{.847, 0.72, 0.525}
\newcommand{\rouge}[1]{\textcolor{black}{#1}}
\newcommand{\bleu}[1]{\textcolor{black}{#1}}
\begin{document}

\begin{frontmatter}

\title{A Discontinuous Galerkin Method for \\Approximating the Stationary Distribution of Stochastic Fluid-Fluid Processes}

\author[uoa,acems]{Nigel Bean}
\address[uoa]{The University of Adelaide, School of Mathematical Sciences}
\address[acems]{ARC Centre of Excellence for Mathematical and Statistical Frontiers (ACEMS)}

\author[uoa,acems]{Giang T. Nguyen\corref{mycorrespondingauthor}}

\author[acems,utas]{Ma{\l}gorzata M. O'Reilly}
\address[utas]{The University of Tasmania, Faculty of Science, Engineering, and Technology}


\author[berlin1,berlin2]{Vikram Sunkara}
\address[berlin1]{Freie Universit\"{a}t Berlin, Department of Mathematics and Computer Science}
\address[berlin2]{Konrad-Zuse-Zentrum for Informationstechnik, Department of Numerical Mathematics}

\begin{abstract}
Introduced by Bean and O'Reilly (2014), a stochastic fluid-fluid process is a Markov processes $\{X_t, Y_t, \varphi_t\}_{t \geq 0}$, where the first fluid $X_t$ is driven by the Markov chain $\varphi_t$, and the second fluid $Y_t$ is driven by $\varphi_t$ as well as by $X_t$. That paper derived a closed-form expression for the joint stationary distribution, given in terms of operators acting on measures, which does not lend itself easily to numerical computations. 

Here, we construct a discontinuous Galerkin method for approximating this stationary distribution, and illustrate the methodology using an on-off bandwidth sharing system, which is a special case of a stochastic fluid-fluid process. 
\end{abstract}

\begin{keyword}
stochastic fluid--fluid processes; stationary distribution; discontinuous Galerkin method 
\end{keyword}

\end{frontmatter}



\section{Introduction}

A stochastic fluid process $\{X_t, \varphi_t\}_{t \geq 0}$ is a two-dimensional Markov process, where the phase $\varphi_t$ is a continuous-time Markov chain on a finite state space~$\mathcal{S}$, and the fluid $X_t$ varies linearly at rate $c_{\varphi_t}$. A subset of Markov additive processes, stochastic fluids have been well-analysed in the past two decades. There have been two recent generalisations of stochastic fluid processes to a higher dimension: Miyazawa and Zwart~\cite{mz2012} analysed discrete-time multidimensional Markov additive processes, and Bean and O'Reilly~\cite{bo2014} studied the so-called \emph{stochastic fluid-fluid process}, the latter is our focus in this paper. 

A stochastic fluid-fluid is a Markov process $\{X_t, Y_t, \varphi_t\}_{t \geq 0}$, where the phase $\varphi_t$ is still a Markov chain on a finite state space $\mathcal{S}$;  $X_t \in (-\infty,\infty)$ is the first fluid, which varies linearly at rate~$c_{\varphi_t}$
	\begin{align*} 
	 	X_t := X_0 + \int_0^t c_{\varphi_s} \ud s; 
	\end{align*} 
and $Y_t$ is the second fluid, which varies linearly at rate $r_{\varphi_t}(X_t)$: 
	\begin{align*} 
		Y_t := Y_0 + \int_0^t r_{\varphi_s}(X_s) \ud s.
	\end{align*} 
	%
	As the classic fluid process $\{X_t, \varphi_t\}_{t \geq 0}$ is used extensively in many areas, such as insurance and environmental modelling, it is clear that stochastic fluid-fluid models have even a wider range of applicability. 

An example of application for a stochastic fluid fluid is the modelling of growth and bleaching of coral reefs, as described in~\cite{bo2014}. In this process, we can model the density of symbiotic zooxanthellae at time $t$ by $X_t$, with the positive rates $c_i$ corresponding to the growth of the zooxanthellae, the negative rates to the bleaching. If the density $X_t$ is below a certain threshold~$x$, the coral cannot store lipids until the density increases again past this level. During the time $X_t \in (0,x)$, the coral relies on stored lipids, modelled as $Y_t$, and dies when the latter runs out, that is, $Y_t = 0$. 

While the analyses in~\cite{mz2012, bo2014} are markedly different, both papers drew inspiration from Neuts' matrix-analytic approach \cite{neuts81, lr99} to obtain the limiting behaviour of these processes, working with operators on function spaces instead of matrices. Thus, their closed-form expressions for the limiting distributions (\cite[Theorem~4.1]{mz2012}, \cite[Theorem~2]{bo2014}) are given in terms of operators acting on measures, which are not immediately amenable to numerical computations for real-life applications. One way to numerically handle operators on function spaces is to construct approximations of the operators. To this end, there exist numerical procedures such as finite difference, finite volume, finite element, and discontinuous Galerkin (DG) methods~\cite{c99}. The operators arising from fluid-fluid processes are assumed to be acting on a function space of smooth probability densities. The choice of an approximation method should reflect these properties in its solutions. In the DG method, the conservation of probability mass and  local smoothness can be captured in the approximations~\cite{c99}. 

In this paper, we construct a discontinuous Galerkin method to approximate the joint stationary distribution of a stochastic fluid-fluid process. 
We numerically illustrate the effectiveness of the methodology using an on-off bandwidth-sharing system of two processors~\cite{lnp13}. In this example, inputs into the processors, $X_t$ and $Y_t$, are turned on and off by a Markov chain, $\varphi_t$; the combined output capacity is fixed and allocated according to the workload of the first, high-priority, processor~$X_t$. Latouche \emph{et al.}~\cite{lnp13} evaluated the marginal limiting distribution of the first processor $X_t$, and provided bounds for the marginal limiting distribution of the workload of the second processor $Y_t$. We verify our DG approximations by comparing them against Monte Carlo simulations of the system, against analytical results obtained in~\cite{lnp13}, and against our intuitive understanding of the system dynamics. In all considered cases, we find the approximations to be accurate.


The paper is organised as follows. In Section~\ref{sec:prelim}, we give relevant background to present the joint stationary distribution of a stochastic fluid-fluid process. We construct in Section~\ref{sec:numframe} a discontinuous Galerkin method to approximate the stationary distribution, and include numerical experiments in Section~\ref{sec:numexp}. 

\section{Preliminaries} 
	\label{sec:prelim}
	
Consider a stochastic fluid-fluid process $\{X_t, Y_t, \varphi_t\}_{t \geq 0}$. We assume that $X_t, Y_t \in [0,\infty)$ and that there is a regulated boundary at level $0$ for both buffers: 
	\begin{align*} 
		\frac{\ud}{\ud t} X_t & := \max\{0, c_i\} \quad \mbox{if } X_t = 0 \mbox{ and } \varphi_t = i, \\
          \frac{\ud}{\ud t} Y_t & := \max\{0, r_i(x)\} \quad \mbox{if } Y_t = 0, X_t = x \mbox{ and } \varphi_t = i, 	
	\end{align*} 
for $i \in \mathcal{S}$. Let $T$ be the irreducible generator for the finite Markov chain $\varphi_t$. 
We denote by $C := \diag(c_i)_{i \in \mathcal{S}}$ the diagonal fluid-rate matrix for $X_t$, and $R(x) := \diag(r_i(x))_{i \in \mathcal{S}}$ the diagonal fluid-rate matrix for $Y_t$. For the remainder of this section, we summarize the findings of~\cite{bo2014} on the joint stationary distribution of $\{X_t, Y_t, \varphi_t\}_{t \geq 0}$.

Let $\mathcal{F}$ be the state space of $X_t$, so $\mathcal{F} = [0,\infty)$. For each Markovian state $i \in \mathcal{S}$, we partition $\mathcal{F}$ according to the rates of change $r_i(\cdot)$ for the second fluid $Y_t$: $\mathcal{F} := \mathcal{F}^{+}_i \cup \mathcal{F}^{-}_i \cup \mathcal{F}^{0}_i,$  
where 
		\begin{align} 
				\label{eqn:Fi1}
			\mathcal{F}^{+}_i & := \{u \in \mathcal{F} : r_i(u) > 0\},  \\
				\label{eqn:Fi2}
			\mathcal{F}^{-}_i & :=  \{u \in \mathcal{F}:  r_i(u) < 0\}, \\
				\label{eqn:Fi3}
			\mathcal{F}^{0}_i & := \{u \in \mathcal{F}: r_i(u) = 0\}.
		\end{align} 
For all $i \in \mathcal{S}$, the functions $r_i(\cdot)$ are assumed to be sufficiently well-behaved that $\mathcal{F}^{m}_i$, $m \in \{+, -, 0\}$, is a finite union of intervals and isolated points. Moreover, define $\mathcal{S}_{+} := \{ i \in \mathcal{S}: \mathcal{F}^+_i \neq \varnothing\}$, $\mathcal{S}_{-} := \{ i \in \mathcal{S}: \mathcal{F}^{-}_i \neq \varnothing\}$, and $\mathcal{S}_{0} := \{i \in \mathcal{S}: \mathcal{F}^{0}_i \neq \varnothing\}$.  

We assume that the process $\{X_t, Y_t, \varphi_t\}$ is positive recurrent, in order to guarantee the existence of the joint stationary density operator $\boldsymbol{\pi}(y) = (\pi_i(y))_{i \in \mathcal{S}}$ and the joint stationary mass operator $\boldsymbol{p} = (p_i)_{i \in \mathcal{S}}$, where for $\mathcal{A} \subset \mathcal{F}$
	\begin{align} 
		\label{eqn:jointpi} 
	\pi_i(y)(\mathcal{A}) & := \lim_{t \rightarrow \infty} \frac{\partial}{\partial y} \mathds{P}\left[X_t \in \mathcal{A}, Y_t \leq y, \varphi_t = i\right], \\
		\label{eqn:jointmass}
		p_i(\mathcal{A}) & := \lim_{t \rightarrow \infty}  \mathds{P}[X_t \in \mathcal{A}, Y_t = 0, \varphi_t = i].
	\end{align} 

The determination of $\bs{\pi}(y)$ involves two important matrices of operators, $\mathbb{B}$ and $\Psi$. Intuitively, for a set $\mathcal{A} \in \mathcal{F}$ and a measure vector $\boldsymbol{\mu} = (\mu_i)_{i \in \mathcal{S}}$, $\bs{\mu}\mbox{e}^{\mathbb{B}t}(\mathcal{A})$ gives the conditional probability of $X_t \in \mathcal{A}$, and $\boldsymbol{\mu} \Psi (\mathcal{A})$ the conditional probability of $Y_t$ returning to level zero and doing so when $X_t \in \mathcal{A}$, given that the initial distribution is $\boldsymbol{\mu}$.

\subsection{Matrix $\mathbb{B}$ of Operators}
\label{subsec:B_operators}

Let $\mathcal{M}(\mathcal{S} \times \mathds{R}_{+})$ be the set of integrable complex-valued Borel measures on the Borel $\sigma$-algebra $\mathcal{B}_{\mathcal{S} \times \mathds{R}_{+}}$. For $\bs{\mu} \in \mathcal{M}(\mathcal{S} \times \mathds{R}_{+})$, we can write 
	\begin{align*}
		\bs{\mu} = \vligne{(\mu^{+}_i)_{i \in \mathcal{S}_{+}} & (\mu^{-}_i)_{i \in \mathcal{S}_{-}} & (\mu^{0}_i)_{i \in \mathcal{S}_{0}}},
	\end{align*}
	where $\mu^{\ell}_i \in \mathcal{M}(\mathcal{F}^{\ell}_{i})$, the set of integrable complex-valued Borel measures on $\mathcal{B}_{\mathcal{F}_{i}^{\ell}}$, and  
	\begin{align*}
		 \mu_i^{\ell}(\mathcal{A}) := \mu_i^{\ell}(\mathcal{A} \cap \mathcal{F}_i^{\ell}) \quad \mbox{for } \mathcal{A} \subset \mathcal{F}. 
	\end{align*} 
%
We denote by $\mathbb{V}(t)$ the matrix of operators
$\mathbb{V}_{ij}^{\ell m}(t): \mathcal{M}(\mathcal{F}_i^{\ell}) \mapsto \mathcal{M}(\mathcal{F}^{m}_j)$, $i \in \mathcal{S}_{\ell}, j \in \mathcal{S}_m$ and $\ell, m \in \{+, - ,0\}$, which are defined for $\mathcal{A} \subset \mathcal{F}_j^m$ as follows:
\begin{align} 
	\mu_i^{\ell}\mathbb{V}_{ij}^{\ell m}(t)(\mathcal{A}) := \int_{x \in \mathcal{F}^{\ell}_i} \ud \mu_i^{\ell}(x) \mathds{P}[\varphi_t = j, X_t \in \mathcal{A} | \varphi_0 = i, X_0 = x],
\end{align} 
the probability of the process $\{X_t, \varphi_t\}$ being in the destination set $(\mathcal{A}, j)$ at time $t$, given that it starts in $(\mathcal{F}^{\ell}_i, i)$ according to the measure $\mu_i^{\ell}$. We can write $\mathbb{V}(t)$ in terms of its infinitesimal generator $\mathbb{B}$ as 
	\begin{align}
		\label{eqn:VB} 
		\mathbb{V}(t) = \ue^{\mathbb{B}t} \quad \mbox{for } t \geq 0.  
	\end{align} 
	
	For $\mathbb{V}(t)$ and $\mathbb{B}$, as well as other operators of the same dimensions to be introduced later in the paper, the operators are partitioned according to $\{+, -, 0\}$; for example,
	\begin{align*}
		\mathbb{B} = \left[\begin{array}{lll}
			\mathbb{B}^{++} & \mathbb{B}^{+-} & \mathbb{B}^{+0} \\ 
		    \mathbb{B}^{-+} & \mathbb{B}^{--} & \mathbb{B}^{-0} \\ 
			\mathbb{B}^{0+} & \mathbb{B}^{0-} & \mathbb{B}^{00} \\ 		
			\end{array}\right],  
	\end{align*} 
	where each block $\mathbb{B}^{\ell m}$ is the $|\mathcal{S}_{\ell}| \times |\mathcal{S}_m|$ matrix of operators $\mathbb{B}^{\ell m}_{ij}$. We note that $\mathbb{V}(t)$ forms a strongly continuous semigroup on the set of measures that are absolutely continuous with respect to Lebesgue measure and have analytic densities as well as possibly a point mass at zero in certain phases~\cite{bo2014}. 	
%
	Thus, we restrict the domain of $\mathbb{V}(t)$ to 
the set of such measures, 
%
%
and write  
	\begin{align*}
		\mu_i^{\ell}(\mathcal{A}) = \int_{x \in \mathcal{A}} \ud \mu_i^{\ell}(x) = \int_{x \in \mathcal{A}} \nu_i^{\ell}(x) \ud x + \mathds{1}_{\left\{0 \in \mathcal{A}\right\}}p_i^{\ell},
	\end{align*} 
	where $\nu_i^{\ell}$ is the associated density and $p^{\ell}_i$ is the probability mass at the boundary $0$ when $\varphi_t = i$, for $\ell \in \{+, -, 0\}$ and $i \in \mathcal{S}$. 

For simplicity, from here on we assume a set $\mathcal{A}$ is an interval, which might or might not include its end points, that is, $\mathcal{A} \in \{(u,v), (u,v], [u,v), [u,v]\}$. By \cite[Lemma~3]{bo2014}, the operators $\mathbb{B}_{ij}^{\ell m}: \mathcal{M}(\mathcal{F}^{\ell}_i) \mapsto \mathcal{M}(\mathcal{F}^{m}_j)$, $\ell, m \in \{+, -, 0\}$ and $i, j \in \mathcal{S}$, are given as follows. We include also brief probabilistic interpretations of the terms (see remarks in \cite{bo2014} for more details). Note that in all of the following cases, we assume $\mathcal{A} \subset \mathcal{F}_j^m$; this is without loss of generality, as for any set $\mathcal{G}$ and a measure $\boldsymbol{\mu}$ in our chosen domain,
	\begin{align*}
		[\boldsymbol{\mu}\mathbb{B}(\mathcal{G})]_{\ell, i} := \sum_{m \in \{+, -, 0\}}\sum_{j \in \mathcal{S}} \mu_{i}^{\ell}\mathbb{B}^{\ell m}_{ij}(\mathcal{G} \cap \mathcal{F}_j^m).
	\end{align*} 
	
	\begin{enumerate}
		\item[\textbf{Case 1.}] When $i \neq j$, 
	\end{enumerate} 
		\begin{align*} 
		\mu_i^{\ell} \mathbb{B}_{ij}^{\ell m}(\mathcal{A})  = \left\{ \begin{array}{l} 
		T_{ij} \left[\displaystyle\int_{x \in \mathcal{A} \cap \mathcal{F}^{\ell}_{i}} \nu_{i}^{\ell}(x) \ud x + p_i^{\ell}\mathds{1}_{\left\{0 \in \mathcal{A}\cap \mathcal{F}^{\ell}_i\right\}}\right] \quad \mbox{for } c_j \leq 0, \\
		\vspace*{-0.2cm} \\
		T_{ij} \left[\displaystyle\int_{x \in \mathcal{A} \cap \mathcal{F}^{\ell}_{i}} \nu_{i}^{\ell}(x) \ud x + p_i^{\ell}\mathds{1}_{\left\{0 \in \mathcal{F}^{\ell}_i\right\}}\mathds{1}_{\left\{u = 0, \; v > 0\right\}}\right] \quad \mbox{for } c_j > 0. 
		\end{array} \right. 
		\end{align*}

	Case $1$ represents when there is a stochastic jump from state~$i$ to state~$j \neq i$, which happens with rate $T_{ij}$. The integral represents the probability mass of the intersection of the initiating domain $\mathcal{F}^{\ell}_i$ and the destination set $\mathcal{A}$. If  $c_j \leq 0$, then the point mass $p_i^{\ell}$ is preserved after the change of phase. If $c_j > 0$, then the point mass $p_i^{\ell}$ disperses into the density in state~$j$, and is captured only if $\mathcal{A}$ has an upper bound strictly greater than $0$. Note that in this case $0$ does not have to be in $\mathcal{A}$, it only has to be in the closure of $\mathcal{A}$. 
	
	\begin{enumerate}
		\item[\textbf{Case 2.}] When $i = j$ and $\ell \neq m$, 
	\end{enumerate} 
	\begin{align*} 
		& \mu_i^{\ell} \mathbb{B}_{ii}^{\ell m} (\mathcal{A}) \\
		& =  \left\{ \begin{array}{l}
		-c_i \nu_i^{\ell}(v)\mathds{1}_{\{u \neq v\}}\mathds{1}_{\left\{v \in \partial_{L \backslash R}[\mathcal{F}^{\ell}_i]\right\}} - c_i\nu_i^{\ell}(0)\mathds{1}_{\{v = 0\}}\mathds{1}_{\left\{0 \in \partial_{L\backslash R}[\mathcal{F}^{\ell}_i]\right\}} \quad \mbox{for } c_i < 0, \\
		\vspace*{-0.2cm} \\
		c_i \nu_i^{\ell}(u)\mathds{1}_{\{u \neq v\}}\mathds{1}_{\left\{u \in \partial_{R\backslash L}[\mathcal{F}^{\ell}_i]\right\}} \quad \mbox{for } c_i > 0, 
		\end{array} \right. 
	\end{align*} 
	where $\partial_{L\backslash R}[\mathcal{G}]$ denotes the left boundary point that mustn't also be the right boundary point of the closure of the set $\mathcal{G}$, and similarly for $\partial_{R\backslash L}[\mathcal{G}]$. 
	
	Case $2$ represents a drift from $\mathcal{F}^{\ell}_i$ to $\mathcal{F}^{m}_i$. When neither $\mathcal{F}^{\ell}_i$ nor $\mathcal{A}$ is an isolated point, there is a transfer of density from one set to another through the relevant endpoints. When $\mathcal{A} = \{0\}$, $c_i < 0$, and $0$ is the left endpoint of the closure of $\mathcal{F}_i^{\ell}$ (which can't be an isolated point itself), then there is also an accumulation of point mass.
	
	\begin{enumerate}
		\item[\textbf{Case 3.}] When $i = j$ and $\ell = m$, 
	\end{enumerate} 	 
	\begin{align*}
		& \mu_i^{\ell} \mathbb{B}_{ii}^{\ell \ell}(\mathcal{A}) \\
		& = 
		\left\{ \begin{array}{l} 
			T_{ii}\left[\displaystyle \int_{x \in \mathcal{A}} \nu_i^{\ell} (x)\ud x + p_i^{\ell}\mathds{1}_{\left\{0 \in \mathcal{A}\right\}}\right] 
			+ \mathds{1}_{\{u \neq v\}} \left[c_i \nu_i^{\ell}(u) - c_i\nu_i^{\ell}(v) \mathds{1}_{\left\{v \not\in \partial_R[\mathcal{F}_i^{({\ell})}]\right\}}\right] \\
		\vspace*{-0.2cm} \\			
			 \hspace*{1cm} - c_i \nu_i^{\ell}(0)\mathds{1}_{\{0 \in \mathcal{A}\}}\mathds{1}_{\left\{0 \not\in \partial_R[\mathcal{F}_i^{({\ell})}]\right\}} \quad \mbox{for } c_i < 0, \\
		\vspace*{-0.2cm} \\
			T_{ii}\left[\displaystyle\int_{x \in \mathcal{A}} \nu_i^{\ell} (x)\ud x 
			\right] + \mathds{1}_{\{u \neq v\}}\left[c_i\nu_i^{\ell}(u) \mathds{1}_{\left\{u \not\in \partial_L[\mathcal{F}^{\ell}_i]\right\}} - c_i \nu_i^{\ell}(v)\right]  \\
			\vspace*{-0.2cm} \\
			\hspace*{1cm}
			 \quad \mbox{for } c_i > 0, 
			\end{array}\right.
	\end{align*} 
where $\partial_R[\mathcal{G}]$ denotes the right boundary point of the closure of $\mathcal{G}$, and similarly for $\partial_L[\mathcal{G}]$. Case $3$ represents stochastic jumps out of state $i$ and drift across $\mathcal{F}^{\ell}_i$.

\subsection{Matrix $\Psi$ of Operators}

We denote by $\Psi(s)$ the $|\mathcal{S}_{+}| \times |\mathcal{S}_-|$ matrix of operators recording the Laplace-Stieltjes transforms of the time for $Y_t$ to return, for the first time, to the initial level of zero. Define the stopping time $\theta(y):= \inf \{t > 0: Y_t = y\}$ to be the first time $Y_t$ hits level $y$, then each component $\Psi_{ij}(s): \mathcal{M}(\mathcal{F}^{+}_i) \mapsto \mathcal{M}(\mathcal{F}^{-}_j), i \in \mathcal{S}_{+}$ and $j \in \mathcal{S}_{-}$, is given by  
\begin{align*} 
	& \mu_i^{+}\Psi_{ij}(s) (\mathcal{A}) \\
	& := \int_{x \in \mathcal{F}^+_i} \ud \mu_i^+(x)
	 \mathds{E}\left[\ue^{-s\theta(0)}\mathds{1}_{\left\{\varphi_{\theta(0)} = j, \; X_{\theta(0)} \in \mathcal{A}\right\}} | X_0 = x, Y_0 = 0, \varphi_0 = i\right].
\end{align*} 

Let $b(t) := \int_0^t \left| r_{\varphi_z}(X_z) \right|  \ud z$ be the total unregulated amount of fluid that has flowed into or out of the second buffer $Y_t$ during $[0,t]$, and let $\omega(y) := \inf \{t > 0: b(t) = y\}$ be the first time this accumulated in-out amount hits level $y$. We denote by $\mathbb{U}(y,s)$ the matrix of operators recording the Laplace-Stieltjes transforms of $\omega(y)$: 
	\begin{align*} 
		\mathbb{U}(y,s) = \left[\begin{array}{cc} 
			\mathbb{U}^{++}(y,s) & \mathbb{U}^{+-}(y,s) \\
			\mathbb{U}^{-+}(y,s) & \mathbb{U}^{--}(y,s) \\			
		\end{array}\right], 
	\end{align*} 
	where $\mathbb{U}^{\ell m}$ is the $|\mathcal{S}_{\ell}| \times |\mathcal{S}_{m}|$ matrix of operators $\mathbb{U}^{\ell m}_{ij}$, for $y > 0$, $s \in \mathds{C}$, and Re$(s) \geq 0$. Each operator $\mathbb{U}_{ij}^{\ell m}(y,s): \mathcal{M}(\mathcal{F}_i^{\ell}) \mapsto \mathcal{M} (\mathcal{F}^m_j)$, for $\ell, m \in \{+, -\}$ and $i, j \in \mathcal{S}$, is given by 
	\begin{align*} 
		\mu_i^{\ell}\mathbb{U}_{ij}^{\ell m} (y,s) (\mathcal{A}) := \int_{x \in \mathcal{F}^{\ell}_i} \ud \mu_i^{\ell}(x) \mathds{E}\left[\ue^{-s\omega(y)}\mathds{1}_{\left\{\varphi_{\omega(y)} = j, \; X_{\omega(y)} \in \mathcal{A}\right\}} | \varphi_0 = i, X_0 = x\right].
	\end{align*} 
	
We can write 
	\begin{align*}
		\mathbb{U}(y,s) = \ue^{\mathbb{D}(s)y},
	\end{align*} 
where $\mathbb{D}(s)$ is the infinitesimal generator of the strongly continuous semigroup $\mathbb{U}(\cdot, y)$. Lemma~$4$ of \cite{bo2014} gives the following expression for $\mathbb{D}(s)$. 

\begin{lem} 
	For $y \geq 0$, $s \in \mathds{C}$ with \emph{Re}$(s) \geq 0$, $\ell, m \in \{+, -\}$, and $i \in \mathcal{S}_{\ell}, j \in \mathcal{S}_m$, 
	\begin{align*}
		\mathbb{D}_{ij}^{\ell m}(s) = [\mathbb{R}^{\ell}(
		\mathbb{B}^{\ell m} - s\mathbb{I} + \mathbb{B}^{\ell 0}(s \mathbb{I} - \mathbb{B}^{00})^{-1}\mathbb{B}^{0m})]_{ij}, 
	\end{align*} 
	where $\mathbb{R}^{\ell} := \diag(\mathbb{R}_i^{\ell})_{i \in \mathcal{S}_{\ell}}$ is a diagonal matrix of operators $\mathbb{R}_i^{\ell}$ given by 
	\begin{align*} 
		{\mu}_i^{\ell}\mathbb{R}_i^{\ell}(\mathcal{A}) := \int_{x \in \mathcal{A} \cap \mathcal{F}_i^{\ell}} \frac{1}{r_i(x)}\ud \mu_i^{\ell}(x).
	\end{align*} 
\end{lem}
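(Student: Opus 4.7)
The plan is to identify $\mathbb{D}(s)$ as the strong-operator derivative at $y = 0^+$ of the semigroup $\mathbb{U}(y,s)$, and to read off its form by combining a time-change (from real time $t$ to the accumulated $b$-level $y$) with a censoring argument that eliminates the zero-rate phases $\mathcal{F}^0$. I would work on the dense class of measures consisting of absolutely continuous ones with analytic densities plus possibly an atom at $0$, on which the semigroup $\mathbb{V}(t) = \ue^{\mathbb{B}t}$ is already understood through Section~\ref{subsec:B_operators}.

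First, condition on $\{\varphi_0 = i,\, X_0 = x\}$ with $i \in \mathcal{S}_\ell$, $\ell \in \{+,-\}$, and $x \in \mathcal{F}^\ell_i$. Since $b(t) = \int_0^t |r_{\varphi_s}(X_s)|\,\ud s$ grows initially at rate $|r_i(x)|$, one has $\omega(y) = y/|r_i(x)| + o(y)$. Over that infinitesimal interval the unrescaled process $(X_t,\varphi_t)$ evolves under $\mathbb{B}$ and the discount $\ue^{-s\omega(y)}$ contributes $-s\,\ud t = -s\,\ud y/|r_i(x)|$. Pulling the factor $1/r_i(x)$ to the left (the sign being absorbed by the block structure of $\mathbb{B}^{\ell m}$ between the $\pm$ blocks) produces the prefactor $\mathbb{R}^\ell$ and, temporarily ignoring excursions through $\mathcal{F}^0$, yields the partial contribution $\mathbb{R}^\ell(\mathbb{B}^{\ell m} - s\mathbb{I})$.

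Next, while $\varphi_t \in \mathcal{S}_0$ and $X_t \in \mathcal{F}^0_{\varphi_t}$, the clock $b(t)$ is frozen, so such sojourns are invisible in the $y$-parameterisation while still accruing discount $s$. I would handle them by the standard censoring formula for generators: splitting $\mathbb{B}$ into ``visible'' indices $\{+,-\}$ and ``invisible'' index $0$, the effective visible generator at discount $s$ is
\begin{align*}
\mathbb{B}^{\ell m} - s\mathbb{I} + \mathbb{B}^{\ell 0}(s\mathbb{I} - \mathbb{B}^{00})^{-1}\mathbb{B}^{0m},
\end{align*}
where $(s\mathbb{I} - \mathbb{B}^{00})^{-1}$ is the Laplace transform of the occupation kernel of the sub-Markovian dynamics censored to $\mathcal{F}^0$, and the flanking operators $\mathbb{B}^{\ell 0}$ and $\mathbb{B}^{0m}$ account, respectively, for entry into and exit from $\mathcal{F}^0$. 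Prepending $\mathbb{R}^\ell$ for the time change delivers exactly the claimed expression for $\mathbb{D}_{ij}^{\ell m}(s)$.

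The main obstacle is the rigorous justification of censoring in the operator-theoretic setting rather than the familiar finite-dimensional matrix one. I would need to verify that $(s\mathbb{I}-\mathbb{B}^{00})^{-1}$ exists as a bounded operator on the chosen domain for $\re(s) \geq 0$; this should follow from $\mathbb{B}^{00}$ generating a sub-Markovian strongly continuous semigroup via Hille--Yosida for $\re(s) > 0$, with extension to the imaginary axis by analytic continuation and a boundary argument. I would then confirm the candidate $\mathbb{D}(s)$ truly generates $\mathbb{U}(\cdot,s)$ by checking that $\mathbb{U}(y,s)$ satisfies the Kolmogorov backward equation driven by $\mathbb{D}(s)$, whose solution is unique on our domain by the strong continuity of the semigroup noted in Section~\ref{subsec:B_operators}.
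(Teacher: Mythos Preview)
The paper does not actually prove this lemma: it is quoted verbatim as ``Lemma~4 of~\cite{bo2014}'' and no argument is supplied here. So there is no in-paper proof to compare against; the authors simply import the result from Bean and O'Reilly's original operator-analytic treatment.

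That said, your sketch is the right one and is essentially how the result is obtained in~\cite{bo2014}. The two ingredients---a random time change $t \mapsto b(t)$ that scales the generator by $1/|r_i(x)|$ (hence the left factor $\mathbb{R}^\ell$), and a censoring of the $\mathcal{F}^0$ block where the $b$-clock is frozen, producing the Schur-complement-like correction $\mathbb{B}^{\ell 0}(s\mathbb{I}-\mathbb{B}^{00})^{-1}\mathbb{B}^{0m}$---are exactly the mechanism behind the formula. Your identification of the main technical burden (existence and boundedness of $(s\mathbb{I}-\mathbb{B}^{00})^{-1}$ on the chosen domain, and verification that the candidate really generates $\mathbb{U}(\cdot,s)$) is accurate; in the finite-phase fluid setting these are handled by the strong continuity of $\mathbb{V}(t)$ on the restricted measure class and the sub-Markovian nature of $\mathbb{B}^{00}$. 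One small point to tighten: the remark that ``the sign is absorbed by the block structure of $\mathbb{B}^{\ell m}$'' is imprecise---the factor is $1/|r_i(x)|$ regardless of the sign of $r_i$, and $\mathbb{R}^\ell$ as defined in the statement carries $1/r_i(x)$, so you should be explicit that on $\mathcal{F}^-_i$ the absolute value is what enters the time change while the stated $\mathbb{R}^\ell$ already encodes this through the convention on the $\pm$ blocks.
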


By \cite[Theorem~1]{bo2014}, $\Psi(s)$ has the following characterisation. 

\begin{theo} 
	\label{theo:Psi} 
	For \emph{Re}$(s) \geq 0$, $\Psi(s)$ satisfies the  equation: 
	\begin{align*} 
		\mathbb{D}^{+-}(s) + \Psi(s)\mathbb{D}^{-+}(s)\Psi(s) + \mathbb{D}^{++}(s)\Psi(s) + \Psi(s)\mathbb{D}^{--}(s) = 0. 
	\end{align*} 
	Furthermore, if $s$ is real then $\Psi(s)$ is the minimal nonnegative solution. 
\end{theo}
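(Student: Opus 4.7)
The plan is to derive the Riccati equation via a first-return/level-crossing decomposition for $\Psi(s)$, transposing the classical matrix argument for fluid queues into the operator-valued setting of \cite{bo2014}. The key reparametrization is the $b$-clock $b(t) = \int_0^t |r_{\varphi_s}(X_s)|\, \ud s$: in this clock $Y_t$ becomes a $\pm 1$ Markov-modulated fluid whose $(X_t, \varphi_t)$ dynamics are carried by the Laplace-transformed semigroup $\mathbb{U}(b,s) = \ue^{\mathbb{D}(s) b}$. Thus $\mathbb{D}(s)$ plays the role of the phase generator in the standard derivation of the matrix Riccati equation, and one expects the operator identity to emerge from the same route.

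Concretely, I would fix an infinitesimal $\Delta y > 0$ and decompose the first return of $Y_t$ to $0$, starting from $(0, i \in \mathcal{S}_+)$ with an initial measure $\mu_i^+$, using the strong Markov property at the hitting times of level $\Delta y$. There are two mutually exclusive scenarios: either $Y_t$ returns to $0$ before reaching level $\Delta y$, contributing $\mathbb{D}^{+-}(s)\,\Delta y + o(\Delta y)$; or $Y_t$ first attains level $\Delta y$ in a $+$-phase (operator $\mathbb{U}^{++}(\Delta y, s) = \mathbb{I} + \mathbb{D}^{++}(s)\Delta y + o(\Delta y)$), then by translation invariance above $0$ it first touches level $\Delta y$ in a $-$-phase via the operator $\Psi(s)$, and finally descends from $\Delta y$ to $0$ via the downward-passage operator $\ue^{K(s)\Delta y} = \mathbb{I} + K(s)\Delta y + o(\Delta y)$ with $K(s) := \mathbb{D}^{--}(s) + \mathbb{D}^{-+}(s)\Psi(s)$. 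Assembling these, expanding to first order, cancelling the leading $\Psi(s)$, and collecting $O(\Delta y)$ terms produces
\begin{align*}
0 = \mathbb{D}^{+-}(s) + \mathbb{D}^{++}(s)\Psi(s) + \Psi(s)\mathbb{D}^{--}(s) + \Psi(s)\mathbb{D}^{-+}(s)\Psi(s),
\end{align*}
which is the required equation.

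For the minimality claim when $s$ is real, I would construct $\Psi(s)$ as the monotone limit of truncated operators $\Psi_n(s)$ obtained by iterating the Riccati map starting from $\Psi_0(s) = 0$; each $\Psi_n(s)$ is nonnegative and records contributions from paths with at most $n$ excursions above intermediate levels. Any other nonnegative solution $\widetilde{\Psi}(s)$ dominates $\Psi_n(s)$ by induction (using nonnegativity of the off-diagonal parts of $\mathbb{D}(s)$), and hence dominates the limit $\Psi(s)$.

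The main obstacle will be the second step: while the decomposition is intuitive in the classical scalar-coefficient matrix case, at the operator level one must rigorously invoke the strong Markov property of $(X_t, Y_t, \varphi_t)$ at the stopping times $\{t : Y_t = \Delta y\}$ together with a translation-invariance statement for $Y_t$ above $0$. This invariance is not self-evident because the rate $r_i(X_t)$ of $Y_t$ is coupled to $X_t$, so the post-stopping distribution of $(X_t, \varphi_t)$ on $\mathcal{F}$ must be carefully tracked in the appropriate measure space. Likewise, the $o(\Delta y)$ remainders must be controlled uniformly on the admissible class of initial measures (analytic densities together with a possible boundary mass at $0$) for the limiting first-order identity to yield an exact equation rather than a merely asymptotic one.
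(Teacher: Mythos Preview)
The paper does not actually prove this theorem: it is stated verbatim as ``By \cite[Theorem~1]{bo2014}, $\Psi(s)$ has the following characterisation,'' and no argument is given beyond the citation. So there is no in-paper proof to compare against.

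Your sketch is the standard first-passage decomposition transported to the operator setting, and it is presumably close in spirit to what \cite{bo2014} does. The identification of the obstacles is accurate: the $Y$-translation invariance must be argued through the $b$-clock reparametrization (since $Y$ itself is not spatially homogeneous in $t$-time, only in $b$-time), and the $o(\Delta y)$ control needs the strong continuity of the semigroup $\mathbb{U}(\cdot,s)$ on the restricted measure class. One minor point: your minimality argument via Picard iteration $\Psi_0 = 0$, $\Psi_{n+1} = \mathcal{R}(\Psi_n)$ requires that the iteration is monotone, which in turn needs that $(-\mathbb{D}^{++}(s))^{-1}$ and $(-\mathbb{D}^{--}(s))^{-1}$ act as nonnegative operators; this is where the assumption $s \in \mathds{R}_{\geq 0}$ enters, and you should flag it explicitly rather than only invoking nonnegativity of the off-diagonal blocks.
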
 

\subsection{Stationary Distribution} 

Let $\Psi := \Psi(0)$. We define $\theta_n := \inf\{t \geq \theta_{n - 1}: Y_t = 0\}$, for $n \geq  2$, to be the sequence of hitting times to level $0$ of $Y_t$, with $\theta_1: = \theta(0)$. Consider a discrete-time Markov process $\{X_{\theta_n}, \varphi_{\theta_n}\}_{n \geq 1}$, and for $i \in \mathcal{S}_{-}$ define the measure $\xi_i$ as follows 
	\begin{align*}
		\xi_i(\mathcal{A}) := \lim_{n \rightarrow \infty} \mathds{P}\left[X_{\theta_n} \in \mathcal{A}, \varphi_{\theta_n} = i\right].
	\end{align*} 
	
By \cite{bo2014}, the vector of measures $\boldsymbol{\xi} := (\xi_i)_{i \in \mathcal{S}}$ satisfies the following set of equations 
 	\begin{align}
		\vligne{\boldsymbol{\xi}  & \boldsymbol{0}}\left(-\left[\begin{array}{ll} 
			\mathbb{B}^{--} & \mathbb{B}^{-0} \\
			\mathbb{B}^{0-} & \mathbb{B}^{00} 
		\end{array} \right]\right)^{-1}\left[\begin{array}{l} 
			\mathbb{B}^{-+} \\ 
			\mathbb{B}^{0+}
		\end{array} \right]\Psi & = \boldsymbol{\xi}, \\ 
		\sum_{i \in \mathcal{S}_{-}}\xi_i(\mathcal{F}^-_i) & = 1. 
	\end{align} 
	
We reproduce Theorem 2 of \cite{bo2014} below, which gives the joint stationary distribution of $\{X_t, Y_t, \varphi_t\}$. Recall that the joint stationary density operator $\boldsymbol{\pi}(y) = (\pi_i(y))_{i \in \mathcal{S}}$ for $\{X_t, Y_t, \varphi_t\}$ and the joint stationary mass operator $\boldsymbol{p} = (p_i)_{i \in \mathcal{S}}$ are defined by~\eqref{eqn:jointpi} and \eqref{eqn:jointmass}, respectively. We can write 
	\begin{align*} 
		\bs{\pi}(y) = \vligne{\bs{\pi}^{+}(y) & \bs{\pi}^{-}(y) & \bs{\pi}^{0}(y)} = \vligne{(\pi^{+}_i(y))_{i \in \mathcal{S}_+} & (\pi^{-}(y)_{i \in \mathcal{S}_{-}} & (\pi^{0}(y))_{i \in \mathcal{S}_{0}}},
	\end{align*} 
	where  
	\begin{align*} 
		\pi_i^{\ell}(y)(\mathcal{A}) = \pi_i^{\ell}(y)(\mathcal{A}\cap \mathcal{F}_i^{\ell}) \quad \mbox{for } \mathcal{A} \subset \mathcal{F}.
	\end{align*} 
\begin{theo} 
	\label{theo:density} 
The density $\boldsymbol{\pi}^{\ell}(y)$, for $\ell \in \{+,-,0\}$ and $y > 0$, and the probability mass $\bs{p}^{m}$, for $m \in \{-,0\}$, satisfy the following set of equations:
	\begin{align} 
	& \; \bs{\pi}^{0}(y) = \vligne{\bs{\pi}^{+}(y) & \bs{\pi}^{-}(y)}\left[\begin{array}{l} \mathbb{B}^{+0} \\ \mathbb{B}^{-0} \end{array} \right]\left(-\mathbb{B}^{00}\right)^{-1}, \\
	&  \vligne{\bs{\pi}^{+}(y) & \bs{\pi}^{-}(y)} = \vligne{\bs{p}^{-} & \bs{p}^{0}}\left[\begin{array}{l} \mathbb{B}^{-+} \\ \mathbb{B}^{0+} \end{array} \right]\vligne{\ue^{\mathbb{K}y} & \ue^{\mathbb{K}y}\Psi}\left[\begin{array}{cc} \mathbb{R}^{+} & 0 \\ 0 & \mathbb{R}^{-}\end{array}\right], \\
	&  \vligne{\bs{p}^{-}  & \bs{p}^{0}} = \alpha \vligne{\bs{\xi} & \bs{0}} 
	\left(-\left[\begin{array}{ll} 
		\mathbb{B}^{--} & \mathbb{B}^{-0} \\
		\mathbb{B}^{0-} & \mathbb{B}^{00} 
		\end{array} \right] \right)^{-1},  \label{eqn:mass}\\
	& \sum_{\ell \in \{+,-,0\}}\sum_{i \in \mathcal{S}_{\ell}} \int_{y = 0}^{\infty} \pi_i^{\ell}(y)(\mathcal{F}^{\ell}_i)\ud y + \sum_{\ell \in \{-,0\}} \sum_{i \in \mathcal{S}_{\ell}}p^{\ell}_i(\mathcal{F}^{\ell}_i) = 1,
	\end{align}
	where $\mathbb{K} := \mathbb{D}^{++}(0) + \Psi\mathbb{D}^{(-+)}(0)$ and $\alpha$ is a normalizing constant. 
\end{theo}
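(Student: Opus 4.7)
The plan is to prove the four equations separately, using flux-balance arguments combined with the operator-analytic framework of \cite{bo2014} and the regenerative structure of $Y_t$ at level zero.

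For the first equation, I would argue that in any zero phase $i \in \mathcal{S}_0$ with $X_t \in \mathcal{F}_i^0$, the second fluid $Y_t$ is locally constant. A flux balance at level $y$ in the zero states therefore requires the rate at which mass enters via phase transitions from $\mathcal{S}_{\pm}$, namely $\bs{\pi}^+(y)\mathbb{B}^{+0} + \bs{\pi}^-(y)\mathbb{B}^{-0}$, to equal the rate at which mass leaves via transitions out of the zero block, namely $\bs{\pi}^0(y)(-\mathbb{B}^{00})$. Inverting $-\mathbb{B}^{00}$ on the appropriate measure space (well-defined since $\mathbb{B}^{00}$ generates a killed semigroup under positive recurrence) yields the claimed expression.

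The second equation is the core step and requires a first-passage decomposition of excursions of $Y_t$ above level $0$. Conditioning on the last visit of $Y_t$ to $0$ before time $t$, the pair $(\bs{\pi}^+(y),\bs{\pi}^-(y))$ decomposes as follows: starting from the boundary distribution $\vligne{\bs{p}^- & \bs{p}^0}$, an instantaneous phase transition into $\mathcal{S}_+$ initiates the excursion via the operator obtained by stacking $\mathbb{B}^{-+}$ and $\mathbb{B}^{0+}$; the process then propagates upward in the $\mathcal{S}_+$ block via $\ue^{\mathbb{K}y}$, where $\mathbb{K}$ is the infinitesimal generator of the operator-valued semigroup tracking $(X_t,\varphi_t)$ at successive up-crossings of $Y_t$, with $\Psi$ absorbing the intermediate down-excursions that return to the same level. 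For the $\mathcal{S}_-$ component at level $y$, one further composes with $\Psi$ to descend from the excursion maximum back down to $y$. Finally, the block-diagonal operator $\diag(\mathbb{R}^+,\mathbb{R}^-)$ converts the time-density produced by $\ue^{\mathbb{K}y}$ into a $y$-density, accounting for the relation $\ud Y/\ud t = r_{\varphi_t}(X_t)$. I would adapt the classical matrix-analytic identity $\pi(y) = \pi(0)\ue^{Ky}[I\mid \Psi]$ of fluid-queue theory to the operator-valued setting, using the strong continuity of $\mathbb{U}(y,s)$ established in \cite{bo2014} and Theorem~\ref{theo:Psi}.

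For the mass equation~\eqref{eqn:mass}, I would exploit the regenerative structure at the level-zero boundary. The embedded chain $\{X_{\theta_n},\varphi_{\theta_n}\}$ on $\mathcal{S}_-$ has stationary measure $\bs{\xi}$, and between consecutive visits of $Y_t$ to level $0$ the tracking process spends time at the boundary in states $\mathcal{S}_-\cup\mathcal{S}_0$ (during which $Y_t$ remains $0$) before being absorbed into the next upward excursion through $\mathcal{S}_+$. The operator $(-\widetilde{\mathbb{B}})^{-1}$, where $\widetilde{\mathbb{B}}$ is the $\{-,0\}\times\{-,0\}$ block of $\mathbb{B}$, records the expected sojourn measure in these states before such an absorption, so by a renewal-reward argument $\vligne{\bs{p}^- & \bs{p}^0}$ is proportional to $\vligne{\bs{\xi} & \bs{0}}(-\widetilde{\mathbb{B}})^{-1}$, the normalizing constant $\alpha$ being fixed by the final total-probability identity, which combines the $\ud y$-integrated densities with the boundary masses. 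The main obstacle is the operator-valued propagation in the second step: rigorously justifying that $\ue^{\mathbb{K}y}$ gives the correct upward propagator and that $\Psi$ composes with it as in the scalar case, despite $\mathbb{K}$ acting on measures on $\mathcal{F}$ rather than finite-dimensional vectors. In particular, one must verify that $\mathbb{K}=\mathbb{D}^{++}(0)+\Psi\mathbb{D}^{-+}(0)$ generates a strongly continuous semigroup on the densities arising in the stationary regime, that the rate operators $\mathbb{R}^\ell$ are bounded on that domain, and that the termwise decomposition into excursions of $Y_t$ can be integrated over all last-visit times without any tail contribution being lost.
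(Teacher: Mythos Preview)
The paper does not actually prove this theorem: it is explicitly stated as a reproduction of \cite[Theorem~2]{bo2014}, with no proof given here. There is therefore no ``paper's own proof'' to compare against; the reader is simply referred to~\cite{bo2014}.

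Your sketch follows the standard matrix-analytic template for fluid queues (censoring to eliminate the zero block, last-exit decomposition from level~$0$ with upward propagator $\ue^{\mathbb{K}y}$ and return operator $\Psi$, renewal-reward for the boundary mass), lifted to the operator setting. This is essentially the strategy pursued in~\cite{bo2014}, so your plan is aligned with the original source. You have correctly flagged the genuine technical content---showing that $\mathbb{K}$ generates a strongly continuous semigroup on the relevant measure class and that the excursion decomposition integrates without loss---and these are precisely the points that require care beyond the finite-dimensional case. One small imprecision: in the second equation the factor $\vligne{\bs{p}^{-} & \bs{p}^{0}}\bigl[\begin{smallmatrix}\mathbb{B}^{-+}\\ \mathbb{B}^{0+}\end{smallmatrix}\bigr]$ is not quite ``an instantaneous phase transition'' but rather the rate at which the boundary mass is injected into $\mathcal{S}_{+}$, which then serves as the initial measure for the upward propagation; your description conflates this slightly with a jump, though the formal argument you outline is correct.
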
 

\section{Discontinuous Galerkin Approximations}
	\label{sec:numframe}

Discontinuous Galerkin methods are used to approximate the solution to a system of partial differential equations. A brief description of these methods is as follows \cite{c99}. On the domain of the approximation, consider a finite sequence of so-called \emph{nodal points}. We refer to each interval between two consecutive nodal points as a \emph{mesh}, and the combination of meshes and nodal points as a \emph{stencil}. Within each mesh, we have a finite element approximation, which constructs a finite-dimensional smooth Sobolev space by choosing appropriate piecewise polynomial basis functions, and then projects the partial differential equations onto this space. This projection leads to a new system of equations, referred to as the \emph{weak form} of the original PDEs. 

There is a \emph{flux operator} moving probability from one mesh to another, in a manner similar to the underlying principle of a finite volume approximation: integrating the PDEs over each mesh and then constructing a new system of ordinary differential equations, which describe the change in the integral over the mesh. This method conserves probability, and can handle discontinuities, such as jumps and point masses. 

Discontinuous Galerkin methods lead to global approximations in the space of piecewise functions. Intuitively, we sacrifice the continuity between meshes to gain the conservation of probability. 

\subsection{Application to a Stochastic Fluid-Fluid Model}

Here, we construct a discontinuous Galerkin method to approximate the operator matrix $\mathbb{B}$ and subsequently the operator matrix $\Psi$, the two key ingredients of the joint stationary distribution for $\{X_t, Y_t, \varphi_t\}$. We begin with approximating the joint density $f_i(x,t)$ of $\{X_t, \varphi_t\}$: 
	\begin{align*} 
	    f_i(x,t) := \frac{\partial}{\partial x} 	\mathds{P}\left[X_t \leq x, \varphi_t = i\right],
	\end{align*} 
which satisfies the system of partial differential equations 
\begin{align}
	\label{eq:pde_density}
\frac{\partial}{\partial t} f_i(x,t) = \sum_{j\in \mathcal{S}}  f_j(x,t)T_{ji} - c_i \frac{\partial}{\partial x} f_i(x,t), 
\end{align}
subject to suitable boundary conditions~\cite{bo2014}.
%


While $X_t \in [0,\infty)$, any numerical approximation by necessity has to take place on a finite interval. Clearly, the state space truncation results in a point mass at the upper bound, which we have to address properly. It is important to choose an interval sufficiently large in order to control the error induced by the artificial upper bound for $X_t$.  We shall further comment on this in Section~\ref{sec:numexp}, where we report our numerical experiments. 

Let $[0,\mathcal{I}]$ be the domain of the approximation, where $\mathcal{I} < \infty$. We denote by $\left\{x_1, x_2, \ldots , x_K\right\}$ a finite sequence of $K$ nodal points on $[0,\mathcal{I}]$, with $x_1 := 0$ and $x_K := \mathcal{I}$, and by $\left\{\mathcal{D}_1, \ldots, \mathcal{D}_{K - 1}\right\}$ the sequence of corresponding meshes, $\mathcal{D}_i := [x_i, x_{i + 1}]$, for $i = 1, \ldots, K  - 1$. As there are two point masses, one at zero where there is a regulated boundary for $X_t$ and another at $\mathcal{I}$ where there is an artificial upper bound, the two end meshes, $\mathcal{D}_1$ and $\mathcal{D}_{K - 1}$, each of which contains both a point mass and density. 

For $k = 1, \ldots, K - 1$, we choose $N_k$ functions $\phi_n^k: \mathcal{D}_k \mapsto [0,\infty)$, $n =  1, \ldots, N_k$,  to be the basis functions, the span of which forms our approximation space, $V_K :=  \oplus_{k = 1}^{K - 1}\{\phi_1^k, \ldots, \phi_{N_k}^k\}$. Then, a function $u_i(\cdot, \cdot) \in V_K$ has the form: 
\begin{align}
	\label{eqn:uxt} 
	u_i(x,t) = \sum_{k = 1}^{K - 1}\sum_{n = 1}^{N_k} \alpha_{i,n}^k(t) \phi_n^k(x) \quad \mbox{for } x \in [0, \mathcal{I}] \mbox{ and } t \geq 0,
\end{align}
for some coefficient functions $\alpha^{k}_{i,n}(t)$. To construct an approximation for $f_i(x,t)$ we need to determine these functions $\alpha_{i,n}^{k}(t)$ or, equivalently, the $N$-dimensional row vector  
	\begin{align*} 
		\bs{\alpha}_i(t) := \left(\bs{\alpha}^1_i(t), \ldots, \bs{\alpha}_i^{K - 1}(t)\right),  
	\end{align*} 
	where  
	\begin{align*} 
		\bs{\alpha}_{i}^{k}(t) := \left(\alpha^k_{i, 1}(t), \ldots, \alpha^k_{i, N_k}(t)\right) \quad \mbox{for } k = 1, \ldots, K - 1,
	\end{align*} 
	and $N:= \sum\limits_{k = 1}^{K - 1} N_k $ is the total number of basis functions on $[0, \mathcal{I}]$. 
	
To that end, there are three important matrices we need to introduce. The first two matrices, $M$ and $G$, are $N \times N$ block-diagonal and detail the dynamics within each mesh:
	\begin{align*} 
		M & := \left[\begin{array}{cccc}  M^1 & & \\ 
		  & \ddots & \\\ 
		  &           & M^{K - 1}  
		  \end{array} \right], \quad G := \left[\begin{array}{cccc}  G^1 & & \\ 
		  & \ddots & \\\ 
		  &           & G^{K - 1}  
		  \end{array} \right],	\end{align*} 
	where, for $m, n \in \{1, \ldots, N_k\}$,
 \begin{align}
 	 [M^k]_{mn} & := \int_{\mathcal{D}_k} \phi_m^k(x) \phi_{n}^k(x) \ud x,  \\
	[G^k]_{mn} & := \int_{\mathcal{D}_k} \phi_m^k(x)\left[\frac{\partial}{\partial x}\phi^k_{n}(x)\right]  \ud x. 
	\end{align} 
	
	The third matrix, $F_i$, $i \in \mathcal{S}$, is related to the dynamics between adjacent meshes: it is the \emph{flux operator} moving probability from one mesh to another. Let $u^k_i(x,t)$ be the projection of $u_i(x,t)$ onto the mesh $\mathcal{D}_k$. A central idea of the discontinuous Galerkin method is that the values of $u^k_i(x,t)$ on different meshes are linked to each other only through a \emph{numerical flux}~$f^*$ and that one can consider the approximation 
	\begin{align*} 
		\left[u_i^k(x,t)\phi_m^k(x)\right]_{x_k}^{x_{k + 1}}  =  \left[u_i^k(x,t)\phi_m^k(x)\right]_{x_k^L}^{x_{k}^R} \approx \left[f^*_i(x,t)\phi_m^k(x)\right]_{x_k^L}^{x_{k}^R},
	\end{align*} 
	where $x_k^R$ and $x_k^L$ denote the right and left endpoints of the $k$th mesh, respectively, and $\left[g(x)\right]_{x_a}^{x_b} :=  g(x_{b}) - g(x_{a})$ for any function $g$.  
	
There are many options for $f^*$. Here, we choose a \emph{first-order up-winding scheme} \cite{b2003}, that is, 
	\begin{align} 
		\label{eqn:upw}
		f^*_i (x,t) := \eta(\mbox{sgn}(c_i),x) \lim_{\varepsilon \downarrow 0} u_i(x - \varepsilon c_i,t),  
	\end{align}
for $i \in \mathcal{S}$ and $x \in \{x_1^L, \ldots, x_K^L, x_1^R, \ldots, x_K^R\}$, where $\eta(\mbox{sgn}(c_i),x)$ is an adjustment parameter for when we have a stencil with meshes of different structures; note that the numerical flux $f^*_i(x,t)$ is defined at nodal points only. Suppose we are considering the $k$th mesh, $\mathcal{D}_k$. Then, we define the function $\eta(\mbox{sgn}(c_i),x)$ to be 
	\begin{align*} 
		\eta(\mbox{sgn}(c_i),x) := \left\{\begin{array}{ll} 
			\eta_{k + 1, k} & \mbox{if $x = x_k^{R}$ and $c_i < 0$}, \quad \mbox{ for } k = 1, \ldots, K - 2, \\ 
	\vspace*{-0.3cm} \\
			\eta_{k - 1, k} &  \mbox{if $x = x_k^{L}$ and $c_i > 0$}, \quad\mbox{ for } k = 2, \ldots, K - 1, \\
	\vspace*{-0.3cm} \\ 
			1 & \mbox{otherwise.} 
		\end{array}\right.
	\end{align*}  
	Here, the function $\eta_{\ell,k}$, $\ell \in \{k-1,k + 1\}$, is the ratio of the integrals of the basis functions transferring the probability from the $\ell$th mesh to the $k$th mesh, where 
	\begin{align}
		\eta_{\ell, k} := \frac{\displaystyle\int_{\mathcal{D}_k} \phi_m^{k}(x) \ud x }{\displaystyle\int_{\mathcal{D}_{\ell}} \phi_n^{\ell}(x) \ud x }
	\end{align}  
	for one and then all $m \in \{1, \ldots, N_{k}\}$ and $n \in \{1, \ldots, N_{\ell}\}$. In other words, $\eta_{\ell, k}$ is not dependent on the particular choice of basis functions, $m$ and $n$, but on their meshes, $k$ and $\ell$, only.

This choice of $f^*$ requires the flux information only from the left if the fluid rate $c_{\varphi_t}$ of $X_t$ is positive, and only from the right if that rate is negative. Intuitively, when $c_{\varphi_t} > 0$, the numerical flux of probability going from $\mathcal{D}_{k - 1}$ into $\mathcal{D}_k$ is the probability density accumulated on the right-hand edge of the approximation on $\mathcal{D}_{k - 1}$. Similarly, when $c_{\varphi_t} < 0$, the numerical flux going from $\mathcal{D}_{k}$ into $\mathcal{D}_{k - 1}$ is the probability density accumulated on the left-hand edge of the approximation on $\mathcal{D}_k$.

Thus, on nodal points we have 
	\begin{align} 
		\label{eqn:upw2}
		f_i^*\left(x,t\right) &  := \left\{\begin{array}{lr}
		 u_i\left(x_k^{L+},t\right) & \mbox{if } x = x_k^L \mbox{ and } c_i < 0, \\ 
		  \vspace*{-0.2cm} \\ 
		\eta_{k + 1, k}u_i\left(x_k^{R+},t\right) & \mbox{if } x = x_k^R \mbox{ and } c_i < 0, \\
		 \vspace*{-0.2cm} \\ 
		 \eta_{k - 1,k} u_i\left(x_{k }^{L-},t\right) & \mbox{ if } x = x_k^L \mbox{ and } c_i > 0,  \\
		 \vspace*{-0.2cm} \\ 
		  u_i\left(x_{k }^{R-},t\right) & \mbox{ if } x = x_k^R \mbox{ and } c_i > 0,  \\		 
		\end{array} \right.
	\end{align} 
	where for any function $g$ 
	 	\begin{align} 
			g(x^+) := \lim\limits_{\varepsilon \rightarrow 0} g(x + \varepsilon), \quad g(x^-) := \lim\limits_{\varepsilon \rightarrow 0} g(x - \varepsilon), 
		\end{align} 
		and is used to allow us access to the density on the left-hand edge of $\mathcal{D}_{k + 1}$ and the right-hand edge of $\mathcal{D}_{k - 1}$, respectively. 
	%
	
	We are now ready to introduce the block-tridiagonal matrix $F_i$, 
		\begin{align*} 
			  F_i & := \left[\begin{array}{ccccccc} 
		  F^{11}_i & F_i^{12} &  & & \\ 
		  F^{21}_i & F_i^{22} & F_i^{23} &  \\
		             & \ddots & \ddots & \ddots   \\
		             		            & &  \ddots & \ddots & \ddots   \\
				            \vspace*{-0.2cm} \\
		             &        &    &  F^{K - 2, K - 3}_i   &  F^{K - 2, K - 2}_i & F^{K - 2, K -1}_i \\
		             &         &    &                            &  F^{K - 1, K - 2}_i & F^{K -1, K -1}_i 
		    \end{array}\right],
	\end{align*} 
	where each sub-block $F_{i}^{\ell k}$ is of dimension $N_\ell \times N_k$ and thus  
		\begin{align}
			\bs{\alpha}_i(t) F_i   &\;= \left(\sum_{j = 1}^{K - 1} \bs{\alpha}_i^j(t) F_i^{j1}, \ldots,  \sum_{j = 1}^{K - 1} \bs{\alpha}_i^j(t) F_i^{j, K - 1} \right)  \nonumber
		\end{align} 
	   is an $N$-dimensional row vector. Let $\bs{\phi}^{\ell}(x) := (\phi_1^{\ell}(x), \ldots, \phi^{\ell}_{N_{\ell}}(x))$ be the vector containing all basis functions on the $\ell$th mesh. We define for $c_i > 0$
	\begin{align*}
		F^{k - 1, k}_i
		& := \eta_{k - 1, k} \left[\bs{\phi}^{k - 1}(x_k^-)\right]^{\top} \bs{\phi}^k(x_k) \quad & \mbox{for } {k = 2, \ldots, K - 1}, \\ 
		F^{kk}_i 
		& := - \left[\bs{\phi}^{k}(x_{k + 1}^-)\right]^{\top} \bs{\phi}^k(x_{k + 1}) \quad & \mbox{for } k = 1, \ldots, K - 1,  \\ 
		F_i^{k \ell} & := 0 \quad & \mbox{otherwise,}  
		%
		\intertext{and for $c_i < 0$} 
		%
		F_{i}^{k + 1, k} 
	    %
	    	& = -\eta_{k + 1, k} \left[\bs{\phi}^{k + 1}(x_{k + 1}^+)\right]^{\top} \bs{\phi}^k(x_{k + 1}) \quad & \mbox{for } k = 1, \ldots, K - 2,  \\ 
	    F^{kk}_i 
	    %
	    & = \left[\bs{\phi}^{k}(x_{k}^+)\right]^{\top} \bs{\phi}^k(x_{k}) \quad & \mbox{for } k = 1, \ldots, K - 1,  \\ 
		F_i^{k\ell} & = 0 \quad & \mbox{otherwise.} 
	\end{align*} 
	\begin{lem}
	For $k = 1, \ldots, K - 1$ and $m = 1, \ldots, N_k$, 
		\begin{align} 
		%
		 \left[\sum_{j = 1}^{K - 1} \bs{\alpha}_i^j(t) F_i^{jk}\right]_m 
		 = 	
				-\left[f^*_i(x,t)\phi_m^k(x)\right]_{x_k^L}^{x_{k}^R}.  
\label{eqn:lemma} 
		\end{align} 
		%
	\end{lem}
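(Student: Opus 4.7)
The plan is to prove the identity by direct computation, splitting on the sign of $c_i$ and exploiting the block-tridiagonal structure of $F_i$ to reduce the sum on the left-hand side to at most two nonzero terms. Before computing, I would record the key observation that, by \eqref{eqn:uxt}, the restriction of $u_i(\cdot,t)$ to the mesh $\mathcal{D}_\ell$ equals $\bs{\alpha}_i^\ell(t)\,[\bs{\phi}^\ell(\cdot)]^\top$, so for any nodal point $x_j$ we have the one-sided limits
\begin{align*}
u_i(x_j^{L-},t)=\bs{\alpha}_i^{j-1}(t)\bigl[\bs{\phi}^{j-1}(x_j^-)\bigr]^{\!\top},\qquad u_i(x_j^{L+},t)=\bs{\alpha}_i^{j}(t)\bigl[\bs{\phi}^{j}(x_j^+)\bigr]^{\!\top},
\end{align*}
and analogously for $u_i(x_j^{R\pm},t)$. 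This turns every one-sided limit appearing in the upwind prescription \eqref{eqn:upw2} into a matrix product involving exactly one block $\bs{\alpha}_i^\ell$.

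For the case $c_i>0$, only the blocks $F_i^{k-1,k}$ and $F_i^{kk}$ are nonzero, so
\begin{align*}
\sum_{j=1}^{K-1}\bs{\alpha}_i^j(t)F_i^{jk}=\eta_{k-1,k}\,\bs{\alpha}_i^{k-1}(t)\bigl[\bs{\phi}^{k-1}(x_k^-)\bigr]^{\!\top}\bs{\phi}^k(x_k)-\bs{\alpha}_i^{k}(t)\bigl[\bs{\phi}^{k}(x_{k+1}^-)\bigr]^{\!\top}\bs{\phi}^k(x_{k+1}).
\end{align*}
Taking the $m$-th entry replaces the trailing row vector $\bs{\phi}^k(\cdot)$ by the scalar $\phi_m^k(\cdot)$. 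Substituting \eqref{eqn:upw2} into the right-hand side of \eqref{eqn:lemma} and using the observation above gives exactly the same expression. The case $c_i<0$ is completely analogous, using the blocks $F_i^{k+1,k}$ and $F_i^{kk}$ together with the $c_i<0$ branches of \eqref{eqn:upw2}.

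The only delicate point is the behaviour at the boundary meshes $k=1$ and $k=K-1$, where one of the two neighbouring-mesh blocks is absent by definition (e.g.\ $F_i^{k-1,k}$ is undefined for $k=1$). The plan is to verify that the missing term corresponds precisely to the contribution that would come from outside the computational domain $[0,\mathcal{I}]$, so that the identity still holds once the boundary conditions at $x_1=0$ and $x_K=\mathcal{I}$ (the regulated boundary at $0$ and the artificial upper bound, both of which carry the point masses accounted for separately) are imposed. This bookkeeping is the main obstacle, because it requires checking that the convention for $\eta(\mathrm{sgn}(c_i),x)$ at boundary nodes matches the absence of an adjacent mesh; once that is verified the identity reduces to a one-line matching of terms.
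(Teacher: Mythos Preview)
Your proposal is correct and takes essentially the same approach as the paper: a direct computation that matches the block-tridiagonal entries of $F_i$ against the upwind flux definition \eqref{eqn:upw2}, together with the observation that $u_i(x_j^{\pm},t)=\bs{\alpha}_i^{\ell}(t)[\bs{\phi}^{\ell}(x_j^{\pm})]^{\top}$ for the appropriate mesh $\ell$. The only cosmetic difference is that the paper starts from the right-hand side (expanding $f^*_i$ first and then identifying the resulting sums as $[\bs{\alpha}_i(t)F_i]_{km}$), whereas you start from the left-hand side; the intermediate expression is identical in both. Your extra paragraph on the boundary meshes $k=1$ and $k=K-1$ is more than the paper itself provides---the paper's proof treats only the generic interior case and leaves the boundary bookkeeping implicit---so that concern, while legitimate, is not something the paper resolves either.
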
 
	\begin{proof} 
		We begin with the RHS of~\eqref{eqn:lemma}. For $c_i > 0$, 
		\begin{align} 
	\left[f^*_i(x,t)\phi_m^k(x)\right]_{x_k^L}^{x_{k}^R} 
	& = f_i^*\left(x_k^R, t\right) \phi_m^k\left(x_{k}^R\right) - f_i^*\left(x_k^L, t\right) \phi_m^k\left(x_k^L\right) \nonumber \\ 
	& = u_i\left(x_{k}^{R-}, t\right) \phi_m^k\left(x_k^R\right) - \eta_{k - 1, k} u_i\left(x_k^{L-},t\right) \phi_m^k(x^L_k) \nonumber \\
	& = u_i\left(x_{k + 1}^-, t\right) \phi_m^k\left(x_{k + 1}\right) - \eta_{k - 1, k} u_i\left(x_k^-,t\right) \phi_m^k(x_k). 
	\label{eqn:fstar3} 
		\end{align} 
		As each basis function $\phi_n^j$, for $n = 1, \ldots, N_j$, is trivially zero outside its $j$th mesh, we have 
		\begin{align}
		u_i\left(x_{k + 1}^-, t\right) & = \sum_{j = 1}^{K - 1}\sum_{n = 1}^{N_j} \alpha_{i, n}^{j}(t) \phi_n^{j}\left(x_{k + 1}^{-}\right) = \sum_{n = 1}^{N_k} \alpha_{i, n}^{k}(t) \phi_n^{k}\left(x_{k + 1}^{-}\right), \label{eqn: ui1} \\
		u_i\left(x_k^-,t\right) & =  \sum_{j = 1}^{K - 1}\sum_{n = 1}^{N_j} \alpha_{i, n}^{j}(t) \phi_n^{j}\left(x_{k}^{-}\right) = \sum_{n = 1}^{N_{k - 1}} \alpha_{i, n}^{k - 1}(t) \phi_n^{k - 1}\left(x_{k}^{-}\right).
		\label{eqn: ui2}
		\end{align} 
		Substituting \eqref{eqn: ui1} and \eqref{eqn: ui2} into \eqref{eqn:fstar3} leads to 
		\begin{align} 
	& \left[f^*_i(x,t)\phi_m^k(x)\right]_{x_k^L}^{x_{k}^R} \nonumber \\
	& \quad =  \sum_{n = 1}^{N_k} \alpha_{i, n}^{k}(t) \phi_n^{k}\left(x_{k + 1}^{-}\right)\phi_m^k(x_{k +1}) - \eta_{k  - 1, k}\sum_{n = 1}^{N_{k - 1}} \alpha_{i, n}^{k - 1}(t) \phi_n^{k - 1}\left(x_{k}^{-}\right)\phi_m^k(x_k) \nonumber \\
	& \quad = - \left[\sum_{j = 1}^{K - 1} \bs{\alpha}_i^j(t) F_i^{jk}\right]_m. \nonumber
		\end{align} 
		The argument for $c_i < 0$ follows analogously. 
	\end{proof} 
	\begin{theo} 
		The weak formulation of the PDEs \eqref{eq:pde_density} is the following system of ordinary differential equations 
\begin{align} 
	\label{eqn:wf}
	\frac{\ud}{\ud t} \boldsymbol{\alpha}_{i}(t) = \sum_{j \in \mathcal{S}}  \boldsymbol{\alpha}_{j}(t) T_{ji} + c_i \boldsymbol{\alpha}_{i}(t)  (G + F_i)M^{-1} \quad \mbox{ for } i \in \mathcal{S}. 
\end{align}
\end{theo}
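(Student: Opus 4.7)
The plan is to derive the system of ODEs by the standard discontinuous Galerkin recipe: substitute the Galerkin ansatz $u_i(x,t)$ from \eqref{eqn:uxt} for $f_i(x,t)$ in \eqref{eq:pde_density}, test against each basis function $\phi_m^k$ on its mesh $\mathcal{D}_k$, integrate, integrate by parts in the transport term, and replace the resulting boundary values by the numerical flux $f_i^{*}$. Finally, one reads off the matrix structure and inverts $M$.

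The first step is to apply $\int_{\mathcal{D}_k} \phi_m^k(x)\,\cdot\,\ud x$ to both sides of \eqref{eq:pde_density} with $f_i$ replaced by $u_i$. Using that each $\phi_n^j$ is supported on $\mathcal{D}_j$, the time-derivative term collapses to $\sum_{n} \dot{\alpha}_{i,n}^k(t) [M^k]_{mn}$, which (by symmetry of $M^k$) is the $m$-th entry of $\dot{\boldsymbol{\alpha}}_i^k(t) M^k$; the reaction term similarly gives the $m$-th entry of $\sum_{j\in \mathcal{S}} T_{ji}\,\boldsymbol{\alpha}_j^k(t) M^k$.

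The second step handles the transport term $-c_i \phi_m^k \partial_x u_i^k$ by integration by parts:
\begin{align*}
-c_i \int_{\mathcal{D}_k} \phi_m^k(x)\, \partial_x u_i^k(x,t)\,\ud x = -c_i \bigl[u_i^k(x,t)\phi_m^k(x)\bigr]_{x_k^L}^{x_k^R} + c_i \int_{\mathcal{D}_k} u_i^k(x,t)\,\partial_x \phi_m^k(x)\,\ud x.
\end{align*}
The interior integral evaluates to $c_i \sum_n \alpha_{i,n}^k(t) [G^k]_{nm}$, i.e.\ the $m$-th entry of $c_i\,\boldsymbol{\alpha}_i^k(t) G^k$. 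For the boundary contribution, the DG prescription replaces the trace of $u_i^k$ by the numerical flux, giving $-c_i[f_i^*(x,t)\phi_m^k(x)]_{x_k^L}^{x_k^R}$, and invoking the preceding lemma rewrites this precisely as the $m$-th entry of $c_i \sum_{j=1}^{K-1} \boldsymbol{\alpha}_i^j(t) F_i^{jk}$, i.e.\ the $k$-th block of $c_i\,\boldsymbol{\alpha}_i(t) F_i$.

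The last step is assembly. Collecting the contributions for all $m = 1,\ldots, N_k$ and all $k = 1,\ldots, K-1$ yields, block by block, the identity $\dot{\boldsymbol{\alpha}}_i(t) M = \sum_{j\in\mathcal{S}} T_{ji}\,\boldsymbol{\alpha}_j(t) M + c_i\,\boldsymbol{\alpha}_i(t)(G + F_i)$. Since each $M^k$ is a Gram matrix of linearly independent basis functions and hence invertible, so is the block-diagonal $M$; right-multiplying by $M^{-1}$ produces \eqref{eqn:wf}. The main obstacle is bookkeeping rather than analysis: one must be careful about the row-vector/column-index conventions for $G^k$ and $M^k$, and must check that the lemma applies uniformly at the two endpoint meshes $\mathcal{D}_1$ and $\mathcal{D}_{K-1}$, where the up-winding flux and the truncated block-tridiagonal structure of $F_i$ must be consistent with the treatment of the regulated boundary at $0$ and the artificial boundary at $\mathcal{I}$.
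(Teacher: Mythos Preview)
Your proposal is correct and follows essentially the same route as the paper: substitute the ansatz, test against $\phi_m^k$ on each mesh, integrate by parts in the transport term, replace the boundary trace by the numerical flux via the preceding lemma, and assemble into the matrix identity $\dot{\boldsymbol{\alpha}}_i(t)M=\sum_{j}\boldsymbol{\alpha}_j(t)T_{ji}M+c_i\boldsymbol{\alpha}_i(t)(G+F_i)$. Your explicit remarks on the symmetry of $M^k$ (needed for the row-vector reading of the mass term) and on the invertibility of $M$ are details the paper leaves implicit, but the argument is otherwise identical.
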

\begin{proof} 
	Consider Equation~\eqref{eq:pde_density}, for $i \in \mathcal{S}, t \geq 0, x \in [0,\mathcal{I}]$, which we restate below: 
	\begin{align}
 	\label{eq:pde11}
\frac{\partial}{\partial t} f_i(x,t) = \sum_{j\in \mathcal{S}}  f_j(x,t)T_{ji} - c_i \frac{\partial}{\partial x} f_i(x,t).  
\end{align}
	   For details on the steps of discontinuous Galerkin methods, see \cite{c99}. Here, we start by replacing the density function $f_i(x,t)$ in \eqref{eq:pde11} by its approximation $u_i(x,t)$ to obtain 
	\begin{align} 
		\label{eqn:pdek} 
	\frac{\partial}{\partial t} u_i(x,t) - \sum_{j\in \mathcal{S}} u_j(x,t) T_{ji}  + c_i \frac{\partial}{\partial x} u_i(x,t) = 0.
	\end{align} 
	 
	Multiplying both sides of \eqref{eqn:pdek} by a basis function $\phi_{m}^k, m \in \{1, \ldots, N_k\}$ and $k \in \{1, \ldots, K = 1\}$, and then integrating over the approximation domain $[0,\mathcal{I}]$ gives
	\begin{align} 
		\int_{[0,\mathcal{I}]} \left[\frac{\partial}{\partial t} u_i(x,t)   -  \sum_{j\in \mathcal{S}}  u_j(x,t) T_{ji} 
		+  c_i \frac{\partial}{\partial x} u_i(x,t)  \right]\phi_m^k(x) \ud x & = 0,  \nonumber 
	\intertext{which reduces to}
		\label{eqn:testint} 
		\int_{\mathcal{D}_k} \left[\frac{\partial}{\partial t} u_i^k(x,t)   -  \sum_{j\in \mathcal{S}}  u_j^k(x,t) T_{ji} 
		+  c_i \frac{\partial}{\partial x} u_i^k(x,t)  \right]\phi_m^k(x) \ud x & = 0. 
	\end{align} 
	
	Expanding and then integrating the third term by parts leads to 
	\begin{align*} 
	    c_i  \left[u^k_i(x,t) \phi_m^k(x)\right]^{x_{k}^R}_{x_{k}^L} - \int_{\mathcal{D}_k}c_i u_i^k(x,t) \frac{\ud}{\ud x}\phi_m^k(x)\ud x, 
	\end{align*} 
where the first part can be approximated by using the numerical flux $f^*_i(x,t)$, that is,  
	\begin{align}
		\label{eqn:approx_cu} 
		c_i \left[u_i^k(x,t) \phi_m^k(x)\right]^{x_k^R}_{x_k^L} & 
		\approx 
				c_i\left[f^*_i(x,t)\phi_m^k(x)\right]_{x_k^L}^{x_{k}^R}  
		 =:  -c_i  [\bs{\alpha}_i(t) F_i ]_{km}.
	\end{align}
	
	
	Substituting \eqref{eqn:approx_cu} into \eqref{eqn:testint} and expanding $u^k_i(x,t)$ gives
	\begin{align*} 
		\int_{\mathcal{D}_k} 	\frac{\partial}{\partial t} \left[\sum_{n = 1}^{N_k} \alpha_{i,n}^{k}(t)\phi_n^k(x)\right] \phi_m^k(x)\ud x  - \int_{\mathcal{D}_k} \sum_{j\in \mathcal{S}}  \left[\sum_{n = 1}^{N_k} \alpha_{j,n}^{k}(t)T_{ji} \phi_n^k(x)\right] \phi_m^k(x) \ud x  \nonumber \\
		   - c_i [\bs{\alpha}_i(t) F_i ]_{km} - \int_{\mathcal{D}_k}c_i \left[\sum_{n = 1}^{N_k} \alpha_{i,n}^{k}(t) \phi_n^k(x) \right] \frac{\partial}{\partial x}\phi_m^k(x)\ud x  = 0. 
	\end{align*}
		
	Finally, we switch the order of integrals and summations to arrive at  
	\begin{align*} 
		\sum_{n = 1}^{N_k} \left[ \frac{\ud}{\ud t}  \alpha_{i,n}^{k}(t) \int_{\mathcal{D}_k} \phi_n^k(x) \phi_m^k(x)\ud x  -  \sum_{j\in \mathcal{S}} \alpha_{j,n}^{k}(t)T_{ji}   \int_{\mathcal{D}_k}  \phi_n^k(x) \phi_m^k(x) \ud x \right. \\
		\left.  - c_i \alpha_{i,n}^{k}(t) \int_{\mathcal{D}_k}  \phi_n^k(x)  \frac{\partial}{\partial x}\phi_m^k(x)\ud x \right]  - c_i [\bs{\alpha}_i(t) F_i]_{km}  = 0.  
	\end{align*}
	Equivalently, this can be written in matrix form as 
	\begin{align*}
		\frac{\ud}{\ud t} \bs{\alpha}_i(t)M  = \sum_{j \in \mathcal{S}} \boldsymbol{\alpha}_j(t)T_{ji}M  + c_i\bs{\alpha}_i(t)(G + F_i), 
	\end{align*}  
		which completes the proof. 	
%
\end{proof} 

\begin{rem} 
	\label{rem:conserv}
	Let $V_K$ be the approximation space as defined in~\eqref{eqn:uxt}. 
	If
	\begin{enumerate} 
	\item the weights $\bs{\alpha}_i(t)$ satisfy~\eqref{eqn:wf}, 
	\item the eigenvalues of $c_i(G + F_i)M^{-1}$ are in the negative real half of the complex plane including zero, and
	\item only one basis function is non-zero on each boundary of each mesh,
	\end{enumerate} 
	 then 
	 \begin{align*}
	 	\int_{[0,\mathcal{I}]}u_i(x,t) \ud x = \int_{[0,\mathcal{I}]}u_i(x,0) \ud x.  
	 \end{align*}

Intuitively, the second and third conditions are to ensure stability in the solution and conservation of the transfer of probability across meshes, respectively. 

\end{rem} 
  
Defining the discontinuous Galerkin infinitesimal operator 
	\begin{align} 
		\mathcal{Q}^i := c_i(G + F_i)M^{-1} \quad \mbox{ for } i \in \mathcal{S},
	\end{align} 
	we construct a DG approximation $\mathcal{B}$ for the operator matrix $\mathbb{B}$ as follows. 
	For $i \in \mathcal{S}$ and $\ell \in \{+, -, 0\}$, define {$\gamma_{i}^{ \ell}$} to be an index set: given a region $\mathcal{F}_{i}^{\ell}$ of $X_t$ (defined in~(\ref{eqn:Fi1}--\ref{eqn:Fi3})), {$\gamma_{i}^{\ell}$} is the set of meshes included in $\mathcal{F}^{\ell}_{i}$. For example, if $\mathcal{F}_{1}^{+} = \mathcal{D}_{1} \cup \mathcal{D}_2 \cup \mathcal{D}_5$, then ${\gamma_{1}^{+}} = \{1,2,5\}$. Note that the nodal points should be chosen such that {${\gamma_{i}^{\ell}}\cap \gamma_{i}^{m} = \varnothing$ for all $\ell, m \in \{+,-,0\}, \ell \neq m$, and $\gamma_{i}^{+} \cup \gamma_{i}^{-} \cup \gamma_{i}^{0} = \{1, 2, \ldots, K - 1\}$.}
%
		%
	
	We choose each approximation matrix $\mathcal{B}_{ij}^{\ell m}$ to be $N\times N$. Similar to the definition for the operator matrix $\mathbb{B}$, introduced in Section \ref{subsec:B_operators}, there are three cases. 
	%
	
	\begin{enumerate}
		\item[\textbf{Case 1.}] When $i \neq j$, each $N_k \times N_k$ sub-block $\left[\mathcal{B}_{ij}^{\ell m}\right]_{kk}$ is given by 
		\begin{align*} 
			\left[\mathcal{B}_{ij}^{\ell m}\right]_{kk} := T_{ij}I_{N_k}\mathds{1}_{\left\{k \in {\gamma_{i}^{\ell}} \cap {\gamma_{j}^{m}}\right\}} \quad \mbox{for } k = 1, \ldots, K - 1.
		\end{align*} 
	
		\item[\textbf{Case 2.}] When $i = j$ and $\ell \neq m$, 
	\begin{align*}
		\left[\mathcal{B}_{ii}^{\ell m}\right]_{k, k + 1} :=  \mathcal{Q}^i_{k, k + 1} \mathds{1}_{\left\{k \in {\gamma_{i}^{\ell}}, k + 1 \in {\gamma_{j}^{m}}\right\}} \quad \mbox{for } c_i > 0 \mbox{ and } k = 1, \ldots, K - 2, \\
		\left[\mathcal{B}_{ii}^{\ell m}\right]_{k, k - 1} :=  \mathcal{Q}^i_{k, k - 1} \mathds{1}_{\left\{k \in {\gamma_{i}^{\ell}}, k - 1 \in {\gamma_{j}^{m}}\right\}} \quad \mbox{for } c_i < 0 \mbox{ and } k = 2, \ldots, K - 1.	
	\end{align*}

		\item[\textbf{Case 3.}] When $i = j$ and $\ell = m$, 
	\begin{align*}
		\left[\mathcal{B}_{ii}^{\ell \ell}\right]_{kk} := \left(T_{ii}I_{N_k} + \mathcal{Q}^i_{kk}\right)\mathds{1}_{\{k \in {\gamma_{i}^{\ell}}\}} \quad \mbox{for } k = 1, \ldots, K - 1.
	\end{align*} 
	\end{enumerate} 	 
	
Next, we approximate the operators $\mathbb{R}_i^{\ell}$ by an $N \times N$ block-diagonal matrix $\mathcal{R}_i^{\ell}$, with diagonal sub-blocks $[\mathcal{R}_i^{\ell}]_{kk}$ given by 
	\begin{align}
		\label{eqn:DG-R} 
		\left[\mathcal{R}_i^{\ell}\right]_{kk} := \diag \left[\left(\frac{1}{|\rho_{i,(k,n)}|}\right)_{n  =1, \ldots,N_k}\right]\mathds{1}_{\left\{k \in {\gamma_i^{\ell}}\right\}},
	\end{align}  	
	where $\rho_{i,(k,n)} \neq 0$ is an approximation of the fluid rate $r_i(X_t)$ of $Y_t$, given $\varphi_t = i$, $X_t \in \mathcal{D}_k$, and the basis function $\phi_n^k$. These functions $\rho_{i,(k,n)}$ can sensibly be chosen to be: 
	\begin{align*} 
		\rho_{i,(k,n)} := \int_{\mathcal{D}_k} r_i(x) \phi_{n}^k(x) \ud x.
	\end{align*} 

	{Define $\mathcal{B}^{\ell m} := [\mathcal{B}_{ij}^{\ell m}]_{i \in \mathcal{S}_{\ell}, j \in \mathcal{S}_m}$ for $\ell, m \in \{+, -, 0\}$, and $\mathcal{R}^{\ell} := \diag(\mathcal{R}^{\ell}_i)_{i \in \mathcal{S}_{\ell}}$ for $\ell \in \{+, -\}$.} Putting things together, a DG approximation of the operator $\mathbb{D}_{ij}^{\ell m}$ is given by the $N \times N$ matrix 
		\begin{align*}
			\mathcal{D}_{ij}^{\ell m}(s) := 
			\left[\mathcal{R}^{\ell}\left(\mathcal{B}^{\ell m} - s I + \mathcal{B}^{\ell 0} (s I - \mathcal{B}^{00})^{-1} \mathcal{B}^{0m}\right)\right]_{ij}, 
		\end{align*} 
	 for $s \in \mathds{C}$ with Re$(s) > 0$ and for $\ell, m \in \{+, -\}$. 
	 
	 Let $\mathcal{D}^{\ell m}(s) := [\mathcal{D}_{ij}^{\ell m}(s)]_{i \in \mathcal{S}_{\ell}, j \in \mathcal{S}_{m}}$, then a DG approximation $\uppsi$ of the operator $\Psi$, the latter describing the probability of the fluid $Y_t$ starting at level zero and returning there for the first time, is an $(N \times |\mathcal{S}_{+}|) \times (N \times |\mathcal{S}_{-}|)$ matrix solution to 
	  the equation 
	 \begin{align}
	 	\label{eqn:RiccatiPsi}
	 	\mathcal{D}^{+-}(s) + \uppsi(s)\mathcal{D}^{-+}(s)\uppsi(s) + \mathcal{D}^{++}(s)\uppsi(s) + \uppsi(s)\mathcal{D}^{--}(s) = 0. 
	 \end{align} 
	 The matrix equation~\eqref{eqn:RiccatiPsi} can be solved using one of the many algorithms suggested by Bean \emph{et al.}~\cite{bot08}. We then use this approximation $\uppsi$ to evaluate the limiting density, according to Theorem~\ref{theo:density}.
	 
	 \subsection{An Example} 

Consider a process $\{X_t, Y_t, \varphi_t\}$ where there is only one phase $i = 1$ in $\mathcal{S}$, in other words, there is no Markov modulation. Consequently, the PDE \eqref{eq:pde_density} has the drift component only: 
		\begin{align*} 
			\frac{\partial}{\partial t} f(x,t)  = -c \frac{\partial}{\partial x} f(x,t) \quad \mbox{ for } t \geq 0 \mbox{ and } x \in [0, \mathcal{I}].  
		\end{align*} 
	Suppose we would like to develop a DG approximation for the density function $f(x,t)$ over an interval $[0, 2.75]$. Consider the four meshes 
	\begin{align*} 
	   \mathcal{D}_1:= [0,0.25], \mathcal{D}_2 := [0.25, 1.25], \mathcal{D}_3 := [1.25, 2.25], \mbox{ and } \mathcal{D}_4 := [2.25, 2.75].
	 \end{align*} 
	 We choose the following basis functions:    
	\begin{align}
		\phi_1^1(x) & := 1 \quad \mbox{for } x \in \mathcal{D}_1 ,  \\
		\phi_1^2(x) & := -x + 1.25, \;\; \phi_2^2(x) := x - 0.25 \quad \mbox{for } x \in \mathcal{D}_2, \\ 
		\phi_1^3(x) & := -x + 2.25, \;\; \phi_2^3(x) := x - 1.25 \quad \mbox{for } x \in \mathcal{D}_3, \\ 
		\phi_1^4(x) & := 1 \quad \mbox{for } x \in \mathcal{D}_4, 
	\end{align} 
	as depicted in Figure~\ref{fig:example}. 

\begin{figure}[!h]
\centering
\includegraphics[scale=0.3]{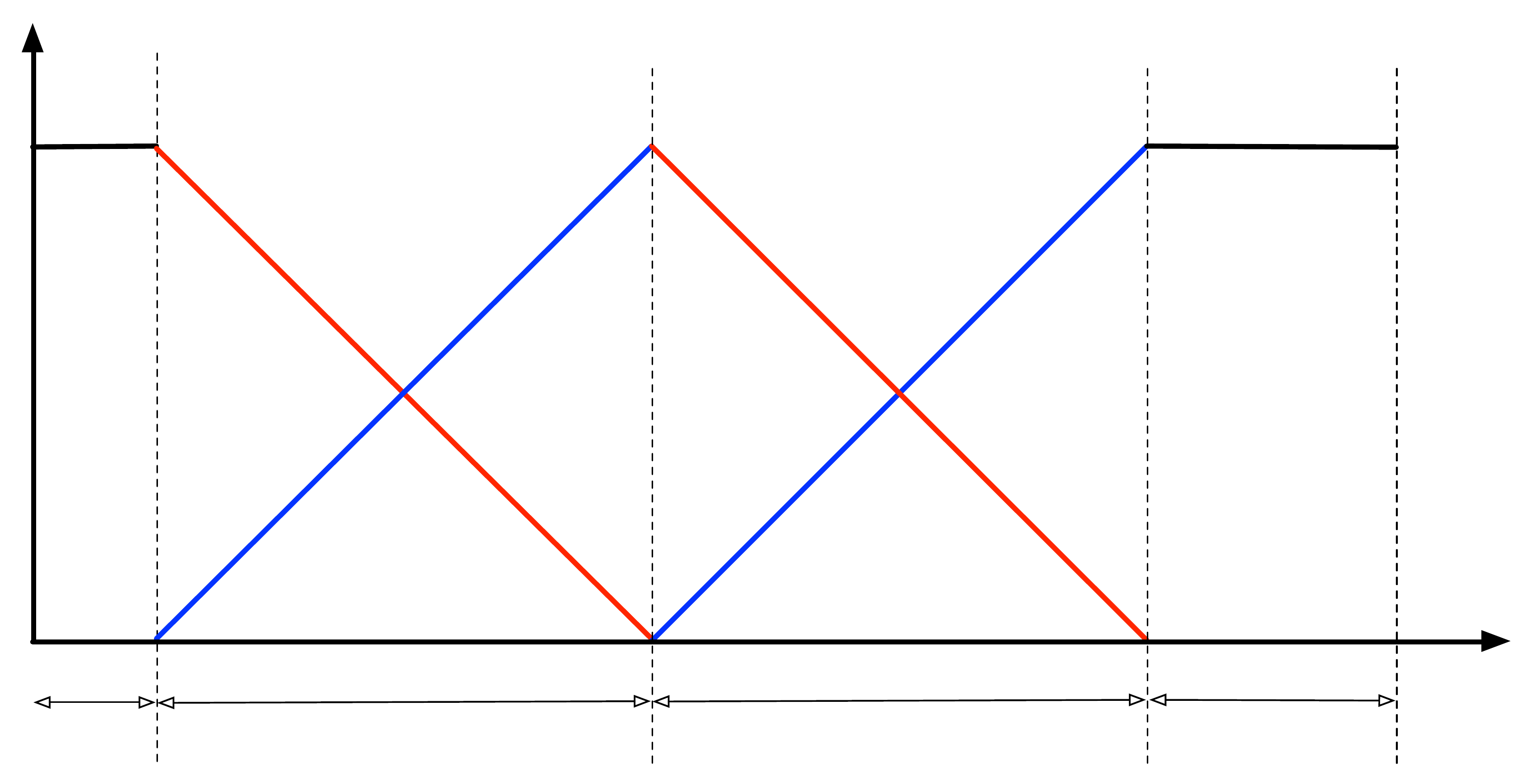}
		 \put(-275,25){\makebox(0,0)[l]{$0$}} 
		  \put(4,25){\makebox(0,0)[l]{$x$}} 
		  \put(-275,112){\makebox(0,0)[l]{$1$}} 
		  \put(-257,-1){\makebox(0,0)[l]{$\mathcal{D}_1$}} 
		   \put(-258,122){\makebox(0,0)[l]{$\phi_1^1(x)$}} 	
		  \put(-205,-1){\makebox(0,0)[l]{$\mathcal{D}_2$}} 
		  \put(-235,112){\makebox(0,0)[l]{$\phi_1^2(x)$}} 
		  \put(-185,112){\makebox(0,0)[l]{$\phi_2^2(x)$}}		
		  \put(-118,-1){\makebox(0,0)[l]{$\mathcal{D}_3$}} 
		  \put(-148,112){\makebox(0,0)[l]{$\phi_1^3(x)$}} 
		  \put(-98,112){\makebox(0,0)[l]{$\phi_2^3(x)$}}	
		  \put(-51,-1){\makebox(0,0)[l]{$\mathcal{D}_4$}} 	
		   \put(-57,122){\makebox(0,0)[l]{$\phi_1^4(x)$}} 			  			  	  	  
\caption{A stencil with nodal points $x_1 = 0, x_2 = 0.25, x_3 = 1.25, x_4 = 2.25, x_5 = 2.75$, and meshes $\mathcal{D}_1:=[0, 0.25], \mathcal{D}_2 := [0.25, 1.25], \mathcal{D}_3 := [1.25, 2.25], \mathcal{D}_4 := [2.25, 2.75]$, where $\mathcal{D}_1$ and $\mathcal{D}_4$ are referred to as \emph{boundary meshes} and $\mathcal{D}_2$ and $\mathcal{D}_3$ as \emph{interior meshes}. There is a constant basis function over each boundary mesh, and there are two linear basis functions over each interior mesh. \label{fig:example}}
\end{figure}

Then, we can verify that the matrices $M$ and $G$ are given by
\begin{align*} 
M & = \left[ \begin{array}{c|cc|cc|c} 
		1/4 &0 &0 &0 &0 &0 \\
	\hline
		0 & 1/3 & 1/6 & 0 &0 &0 \\
		0 & 1/6 & 1/3 & 0 & 0& 0\\
    \hline
		0& 0 & 0 & 1/3 & 1/6 & 0 \\
		0& 0 & 0 & 1/6 & 1/3 & 0 \\
    \hline
		0 & 0 & 0& 0& 0 & 1/2 
		\end{array} \right], \; G = \left[ \begin{array}{c|rr|rr|c} 
		0 &0 &0 &0 &0 &0 \\
\hline
			0 & -1/2 & 1/2 & 0 &0 &0 \\
			0 & -1/2 & 1/2 & 0 & 0& 0\\
\hline
			0& 0 & 0 & -1/2 & 1/2 & 0 \\
			0& 0 & 0 & -1/2 & 1/2 & 0 \\
\hline
			0 & 0 & 0& 0& 0 & 0 \end{array} \right].		
\end{align*} 

Assume that $c = 1$. Then, the non-zero upper-diagonal blocks are given by 
\begin{align*} 
		F^{12} & = \eta_{1,2} \phi_1^{1}\left(x_{2}^{-}\right) 
		\left[\begin{array}{cc} 
		\phi_1^{2}\left(x_2\right) & \phi_2^{2}\left(x_2\right) 
		\end{array} \right] 
			 = \eta_{1,2} \left[\begin{array}{cc} 1 & 0 \end{array}\right],   \\
		 %
		 F^{23} & = \eta_{2,3}  \left[\begin{array}{c}
			\phi_1^{2}\left(x_{3}^{-}\right) \\
			\vspace*{-0.3cm} \\
			\phi_2^{2}\left(x_{3}^- \right) 
		\end{array}\right] 
		\left[\begin{array}{ccc} 
		\phi_1^{3}\left(x_3\right) & 
		\phi_2^{3}\left(x_3\right) 
		\end{array} \right]
		= \eta_{2,3} \left[\begin{array}{cc} 0 & 0 \\ 
				\vspace*{-0.3cm} \\	
		1 & 0  \end{array}\right] , \\
		%
		F^{34} & = \eta_{3,4} 	
		\left[\begin{array}{c}
			\phi_1^{3}\left(x_{4}^{-}\right) \\
			\vspace*{-0.3cm} \\
			\phi_{2}^{3}\left(x_{4}^{-} \right) 
		\end{array}\right] \phi_1^{4}\left(x_4\right) = \eta_{3,4} \left[\begin{array}{c} 0 \\ 1\end{array}\right], 
		\\
	\intertext{and the non-zero diagonal blocks are} 
		F^{11} & = -\phi_1^{1}\left(x_2^-\right)\phi_2^{1}\left(x_{2}\right) = -1, \\
		%
		F^{22} & =   - \left[\begin{array}{c} 
		\phi_1^{2}\left(x_3^-\right) \\
		\vspace*{-0.3cm} \\ 
		\phi_2^{2}\left(x_3^-\right) 
		\end{array} \right] \left[\begin{array}{cc}
			\phi_1^{2}\left(x_3\right) & \phi_2^{2}\left(x_3\right) 
		\end{array}\right]  
		=   \left[\begin{array}{rr} 0 & 0 \\ 0 & -1 \end{array} \right], \\
		%
		F^{33} & = - \left[\begin{array}{c} 
		\phi_1^{3}\left(x_4^-\right) \\
		\vspace*{-0.4cm} \\ 
		\phi_2^{3}\left(x_4^-\right) 
		\end{array} \right] \left[\begin{array}{cc}
			\phi_1^{3}\left(x_4\right) &  \phi_2^{3}\left(x_4\right) 
		\end{array}\right]  
		=   \left[\begin{array}{rr} 0 & 0 \\ 0 & -1 \end{array}\right],   \\
		%
		F^{44} & = - \phi_1^{4}\left(x_5^-\right) 
			\phi_1^{4}\left(x_5\right) =  - 1, 
			\end{align*} 
			with all other sub-blocks of $F$ being identically zero. Thus, 
\begin{align*} 
	F = \left[\begin{array}{r|cr|cr|r}
	 -1   & \eta_{1,2} &  0  & 0  & 0 & 0 \\ 
	 \hline
         0 &   0 &  0  & 0  & 0 & 0 \\
      0 &  0 & - 1 & \eta_{2,3} & 0  & 0      \\ 
 	 \hline     
     	        0 & 0 &  0  &   0 & 0  & 0 \\ 
	       0  &         0 & 0  &  0 & -1 & \eta_{3,4}  \\
	       	 \hline 
		       0  &      &   0 &  0  &  	&   - 1 
	   	 \end{array} \right],
\end{align*} 
where 
	\begin{align*}
		\eta_{1,2} & :=  \frac{\displaystyle\int_{\mathcal{D}_{2}} \phi_1^{2}(x) \ud x}{\displaystyle\int_{\mathcal{D}_1} \phi_1^{1}(x) \ud x} = 2, \;\; \eta_{2,3}  := \frac{\displaystyle\int_{\mathcal{D}_{3}} \phi_1^{3}(x) \ud x}{\displaystyle\int_{\mathcal{D}_2} \phi_1^{2}(x) \ud x} = 1, \;\; \eta_{3,4} := \frac{\displaystyle\int_{\mathcal{D}_{4}} \phi_1^{4}(x) \ud x}{\displaystyle\int_{\mathcal{D}_3} \phi_1^{3}(x) \ud x} = 1.  
	\end{align*} 
	Consequently, the DG generator $\mathcal{Q} = c (G + F)M^{-1}$ is given by  
	\begin{align*}
		\mathcal{Q}  =  \left[\begin{array}{r|rr|rr|r}
		 -4  &   8 &    -4 &     0 &    0 &    0 \\
		 \hline
		 0   &  -3   &  3 &    0  &   0 &    0 \\ 
            0  &   -1  &  -1  &  4   &   -2   &  0  \\
          \hline            
            0  &  0   &     0  &   -3 &  3 &    0 \\
           0  &   0   &  0 &  -1  &   -1 & 2 \\
          \hline 
            0  &   0 &    0  &   0 &      0 &    0 \end{array}\right],
	\end{align*} 
	where note that all the row sums are zero. 
	
	Now, to illustrate the approximation $\mathcal{B}$ of the operator matrix $\mathbb{B}$, suppose the phase process $\varphi_t \in \mathcal{S} = \{1,2\}$ has a generator matrix $T$. Let the DG approximation for this fluid-fluid process $\{X_t, Y_t, \varphi_t\}$ have the same stencil as described in Figure~\ref{fig:example}. 
	
	When $\varphi_t = 1$, we assume $Y_t$ has a positive fluid rate, $r_1(X_t) > 0$, when $X_t \in \mathcal{D}_1 \cup \mathcal{D}_2$, and a negative rate, $r_1(X_t) < 0$, when $X_t \in \mathcal{D}_3 \cup \mathcal{D}_4$. Thus, $\mathcal{F}^+_1 = \mathcal{D}_1 \cup \mathcal{D}_2$ and $\mathcal{F}^-_1 = \mathcal{D}_3 \cup \mathcal{D}_4.$ When $\varphi_t = 2$, we assume the opposite: $\mathcal{F}^{-}_2 = \mathcal{D}_1 \cup \mathcal{D}_2$ and $\mathcal{F}^+_2 = \mathcal{D}_3 \cup \mathcal{D}_4$. Thus, $
\gamma_{1}^{+} = \gamma_{2}^{-} =  \{1, 2\}$ and $\gamma_1^{-} = \gamma_2^{+} = \{3,4\}.$ 
	\begin{enumerate}
		\item[\textbf{Case 1.}] When $i \neq j$: Suppose $i = 1, j = 2, \ell = +$, and $m = -$. Then, 		%
		\begin{align*}
			\mathcal{B}_{12}^{+-} = \left[\begin{array}{c|cc|cc|c} 
				T_{12} & 0 & 0 & 0 & 0 &  0 \\
				\hline
				0 & T_{12} & 0 & 0 & 0 &  0 \\			
				0 & 0 & T_{12} & 0 & 0 &  0 \\	
				\hline		
				0 & 0 & 0 & 0 & 0 & 0 \\					
				0 & 0 & 0 & 0 & 0 & 0 \\		
				\hline		
				0 & 0 & 0 & 0 & 0 & 0 \\
				\end{array}\right]. 
		\end{align*} 
		%
		\item[\textbf{Case 2.}] When $i = j$ and $\ell \neq m$: suppose $i = 1, \ell = +$, and $m = -$. Then, 
		\begin{align*}
			\mathcal{B}_{11}^{+-} = \left[\begin{array}{r|rr|rr|r} 
			0 & 0 & 0 & 0 & 0 & 0 \\
			\hline
			0 & 0 & 0 & 0 & 0 & 0 \\
			0 & 0 & 0 & 4 & -2 & 0 \\
               \hline			
			0 & 0 & 0 & 0 & 0 & 0 \\
			0 & 0 & 0 & 0 & 0 & 0 \\	
               \hline															
			0 & 0 & 0 & 0 & 0 & 0 \\
			\end{array} 
			\right]. 
		\end{align*} 
		If we are to pre-multiply $\mathcal{B}_{11}^{+-}$ with a row vector, we can see the matrix picks up the value of the right-most basis function with support in $\mathcal{F}_{1}^{+}$, and then distributes this to the first two basis functions with support in $\mathcal{F}_1^{-}$. {This concurs with our physical interpretation (see Section~\ref{subsec:B_operators}) that there is a drift from $\mathcal{F}_{1}^{+}$ to $\mathcal{F}_{1}^{-}$.}  
	\item[\textbf{Case 3.}] When $i = j$ and $\ell = m$: Suppose $i = 1$ and $\ell = -$. Then,   
		\begin{align*}
			\mathcal{B}_{11}^{--} = \left[\begin{array}{r|rr|rr|r} 
			0 & 0 & 0 & 0 & 0 & 0 \\
			\hline
			0 & 0 & 0 & 0 & 0 & 0 \\
			0 & 0 & 0 & 0 & 0 & 0 \\
			\hline
			0 & 0 & 0 & T_{11} & 0 & 0 \\
			0 & 0 & 0 & 0 & T_{11} & 0 \\		
			\hline
			0 & 0 & 0 & 0 & 0 & T_{11} \\
			\end{array}\right] + \left[\begin{array}{c|rr|rr|r} 
			0 & \;\;0 & \;\;0 & 0 & 0 & \;\;0 \\
			\hline
			0 & 0 & 0 & 0 & 0 & 0 \\
			0 & 0 & 0 & 0 & 0 & 0 \\
			\hline
			0 & 0 & 0 & -3 & 3 & 0 \\
			0 & 0 & 0 & -1 & -1 & 2 \\	
			\hline
			0 & 0 & 0 & 0 & 0 & 0 \\			
			\end{array}\right]. 
		\end{align*} 
	\end{enumerate}

\section{Numerical Experiments} 
	\label{sec:numexp}
	
To illustrate the validity of our discontinuous Galerkin approximation, we perform numerical experiments on a stochastic fluid-fluid model, in a three-pronged approach. 

First, we run Monte Carlo simulations, in order to compare the simulated joint density of $\{X_t, \varphi_t\}$ evaluated at the time $Y_t$ first returns to the initial level $0$ against that which is obtained via the return-probability matrix $\uppsi$. This numerically verifies the accuracy of our proposed approximation for the operator matrix $\Psi$.  Second, using $\uppsi$ we evaluate the limiting joint density of $\{X_t, \varphi_t \}$, which we compare against the same density analytically derived in~\cite{lnp13}.  Third, we vary the parameters of the second fluid $Y_t$ to confirm that the approximating joint density for $\{X_t, \varphi_t\}$ does not change, while the marginal limiting distribution for $Y_t$ does, both of which are consistent with our intuitive understanding of the chosen example. In all three procedures, we find the approximations to be accurate. 

We also analyse different choices for the level of spatial discretisation and the degree of polynomial basis functions, with respect to the order of convergence in relevant error terms. 

\subsection{An on-off bandwith-sharing model} 
The example we choose for our experiments is as follows. Consider a stochastic fluid-fluid $\{X_t,Y_t,\varphi_t\}_{t\geq0},$ where $X_t$ and $Y_t$ represent the workloads in Buffer~1 and Buffer~2 at time $t \geq 0$, both driven by the phase $\varphi_t,$ which is a Markov chain on the state space $\mathcal{S} = \{11,10,01,00\}$. Here, the state $11$ indicates inputs to both buffers being \textsc{on}, the state $00$ indicates both being \textsc{off}, the state $10$ is when only the first input is \textsc{on}, and the state $01$ is when only the second is \textsc{on}. The input of Buffer~$k$ is switched from \textsc{on} to \textsc{off} with rate $\alpha_k$, and from \textsc{off} to \textsc{on} with rate $\beta_k$, for $k = 1, 2$. Thus, the infinitesimal generator $T$ for $\varphi_t$ is given by 
\begin{align*} 
	T = \left[ \begin{array}{cccc} -(\alpha_1 + \alpha_2) & \alpha_2 & \alpha_1 & 0 \\
						\beta_2 & -(\alpha_1 + \beta_2) & 0 & \alpha_1 \\
						\beta_1 & 0 & -(\alpha_2 + \beta_1) & 0 \\
						0 & \beta_1 &\beta_2 &-(\beta_1 + \beta_2)
 \end{array}\right].
 \end{align*} 
 
We denote by $\lambda_k$ the input rate of Buffer~$k$ during an \textsc{on} period, and by $\zeta_k$ the output rate, and assume that $\lambda_k > \zeta_k$, for $k = 1, 2$. This example imitates the model considered in~\cite{lnp13}, where the two buffers share a fixed total output capacity $\kappa > 0$. 

In particular, the output rate $\zeta_1$ is constantly $\theta_1$, except for when the buffer is empty, that is, when $X_t = 0$. On the other hand, $\zeta_2$ varies depending on the values of the buffers. More specifically, as Buffer~1 is considered high-priority, this buffer is allocated the entire output capacity $\kappa$ whenever exceeding a pre-determined threshold $x^*$, leaving $\zeta_1 = \kappa$ and $\zeta_2 = 0$. However, when Buffer~1 is empty, Buffer~2 is given the whole output capacity $\kappa$, meaning $\zeta_2 = \kappa$. For $X_t \in (0, x^*)$, $\zeta_1 = \theta_1$ and $\zeta_2 = \theta_2$, for $\theta_1, \theta_2 \geq 0$ such that $\theta_1 + \theta_2 = \kappa$. Clearly, if Buffer~$k$ is empty, its output rate $\zeta_k$ would be zero. Table~\ref{table:outputs} summarizes the output rates for all different scenarios. 

 \begin{table}[h!]
 \centering
 \begin{tabular}{|c|c|c|c|}
 \hline
Buffer $1$               & Buffer $2$ and Phase & $\zeta_1$ & $\zeta_2$ \\
\hline 
$X_1 > x^*$            & $\{Y_t \in [0, \infty)$, $\varphi_t \in \mathcal{S}\}$  & $\kappa$                          & $0$                               \\ 
\hline
$ 0 < X_1 \leq x^*$ & $\{Y_t > 0, \varphi_t \in \mathcal{S}\}$ \mbox{ or } $\{Y_t = 0, \varphi_t \in \{11, 01\}\}$ & $\theta_1$ & $\theta_2$\\ 
 & $\{Y_t = 0, \varphi_t \in \{10, 00\}\}$ & $\theta_1$ & $0$\\ 
\hline 
$X_t  = 0$ & $\{Y_t > 0, \varphi_t \in \mathcal{S}\}$ \mbox{ or } $\{Y_t = 0, \varphi_t \in \{11, 01\}\}$ & 0 & $\kappa$ \\ 
 & $\{Y_t = 0, \varphi_t \in \{10, 00\}\}$ & $0$ & $0$\\ 
\hline  
 \end{tabular}
 \caption{Output rates $\zeta_1$ and $\zeta_2$ for the buffers in different scenarios, as specified in the model in \cite{lnp13}. Note that while Buffer~1 is independent of Buffer~2, its output rate $\zeta_1$ depends on $X_t$.  \label{table:outputs}}
 \end{table}

Given its dynamics as currently defined based on \cite{lnp13}, Buffer~1 is an example of what is known in the literature as a \emph{level-dependent} fluid, because its net rates depend on the value of $X_t$ relative to the threshold $x^*$. As the existing theoretical analysis for stochastic fluid-fluid processes (developed in \cite{bo2014} and briefly summarized in Section~\ref{sec:prelim}) does not allow {for level dependency in the first buffer}, here we modify the bandwith-sharing model in~\cite{lnp13} slightly. We let the output rate $\zeta_1$ of Buffer~1 remain $\theta_1$ for $X_t > 0$, effectively eliminating the threshold effect on the first buffer but keeping the effect on the second. Table~\ref{table:outputs-2} represents the modified rates. 

 \begin{table}[h!]
 \centering
 \begin{tabular}{|c|c|c|c|}
 \hline
Buffer $1$               & Buffer $2$ and Phase & $\zeta_1$ & $\zeta_2$ \\
\hline 
$X_1 > x^*$            & $\{Y_t \in [0, \infty)$, $\varphi_t \in \mathcal{S}\}$  & $\theta_1$                          & $0$                               \\ 
\hline
$ 0 < X_1 \leq x^*$ & $\{Y_t > 0, \varphi_t \in \mathcal{S}\}$ \mbox{ or } $\{Y_t = 0, \varphi_t \in \{11, 01\}\}$ & $\theta_1$ & $\theta_2$\\ 
 & $\{Y_t = 0, \varphi_t \in \{10, 00\}\}$ & $\theta_1$ & $0$\\ 
\hline 
$X_t  = 0$ & $\{Y_t > 0, \varphi_t \in \mathcal{S}\}$ \mbox{ or } $\{Y_t = 0, \varphi_t \in \{11, 01\}\}$ & 0 & $\kappa$ \\ 
 & $\{Y_t = 0, \varphi_t \in \{10, 00\}\}$ & $0$ & $0$\\ 
\hline  
 \end{tabular}
 \caption{Output rates $\zeta_1$ and $\zeta_2$ for the buffers in different scenarios, as modified slightly from the model in \cite{lnp13}. Note that the output rate $\zeta_1$ no longer depends on the value of $X_t$, except for a boundary condition at $0$. \label{table:outputs-2}}
 \end{table}

Consequently, the net rates of change for $X_t$, $c_i$, are given by 
\begin{align*} 
& (c_{11},c_{10},c_{01},c_{00})   = \left\{ \begin{array}{rrrrll} 
  (\lambda_1 - \theta_1, & \lambda_1 - \theta_1, &  0, & 0) & \text{if } X_t = 0, \\
  \vspace*{-0.3cm} \\
  (\lambda_1-\theta_1,  & \lambda_1 -\theta_1, &  -\theta_1, & -\theta_1) & \text{if } X_t > 0,
 \end{array}  \right.
\end{align*} 
and the net rates of change for $Y_t$, $r_i$, are as follows  
\begin{align*} 
& (r_{11},r_{10},r_{01},r_{00})  \\
& = \left\{ \begin{array}{lrrrll}  (\lambda_2 -\kappa, & \;\;\;\;0, & \lambda_2 - \kappa, &  \;\;\;\;0) & \text{if } X_t = 0,  Y_t = 0,\\
  \vspace*{-0.3cm} \\
  											(\lambda_2 - \kappa, & \;\;\;-\kappa,  & \lambda_2 - \kappa, & -\kappa) &\text{if } X_t = 0, Y_t > 0, \\
											  \vspace*{-0.3cm} \\
	(\lambda_2 -\theta_2,  &  0,  & \lambda_2 - \theta_2,  & \;\;\;\; 0) & \text{if } X_t \in (0,x^*), Y_t = 0,\\
											  \vspace*{-0.3cm} \\
	(\lambda_2 - \theta_2, & -\theta_2, & \lambda_2 - \theta_2, & -\theta_2) & \text{if } X_t \in (0,x^*), Y_t > 0,\\
											  \vspace*{-0.3cm} \\
	(\;\;\;\;\;\;\; \lambda_2, & \;\;\;\;0, & \lambda_2, &  \;\;\;\;0) & \text{if } X_t \geq x^*,  Y_t \geq 0. \end{array}
											\right.
\end{align*} 

For our numerical experiments, we use the parameter choices given in~\cite{lnp13}: 
	\begin{align} 
		\label{eqn:parameters}
	 \alpha_1 & =11, \quad  \beta_1 = 1, \quad \lambda_1 = 12.48, \quad  \theta_1 = 1.6, \quad  \kappa = 2.6, \\
	 	\label{eqn:parameters-2}
	\alpha_2 & = 22, \quad \beta_2  = 1, \quad  \lambda_2 = 16.25, \quad \theta_2 = 1.0, \quad x^* = 1.6.
	\end{align} 
%
 
As mentioned previously, while the true problem has an unbounded domain $[0,\infty)$, the discontinuous Garlekin method requires the domain of approximation to be a finite interval. Hence, for all approximations we consider a finite interval but large enough so that the boundary-induced dynamics do not significantly affect {the} results.   
 
To specify the stencil for our numerical approximation, we define a vector $\bs{\omega}_{\rouge{K}, h, \Delta_h}$ of~$K$ nodal points as 
	\begin{align}
		\label{eqn:stencil}
		\bs{\omega}_{\rouge{K}, h, \Delta_h} := \left(0, \Delta_h, \rouge{h}, 2h, \ldots, (K - 4)h, (K - 3)h - \Delta_h, (K - 3)h\right),  
	\end{align} 
for $h > 0$ and for $\Delta_h > 0$, both sufficiently small. \rouge{In this stencil, there are $K - 1$ meshes, of which $K - 5$ are interior meshes of length $h$, the left and right boundary meshes are of length $\Delta_h$, and the second-to-left and second-to-right ones are of length $h - \Delta_h$.} The boundary meshes always have piecewise-constant approximations, because this is sufficient to approximate the point masses accumulated at either boundary. 

\subsection{The return-probability matrix $\uppsi$ via Monte Carlo simulations}  
	\label{subsec:MCs} 

We choose our initial distribution $v_i(x,y,0) := \mathds{P}\left[X_0 \leq x, Y_0 \leq y, \varphi_0 = i\right]$  
to be a point mass of $1$ for $(X_0 = 5, Y_0 = 0, \varphi_0 = 01)$, and zero everywhere else. The choice of an initial point being a point mass instead of a non-degenerate distribution is purely for the convenience of numerical simulation, because it eliminates the need of simulating multiple initial starting points. Using this initial condition and the parameters specified in~(\ref{eqn:parameters}, \ref{eqn:parameters-2}), we simulate $10^5$ trajectories, terminating each path either when Buffer $Y_t$ returns to zero or at time ${V} = 10000$, Overall, 2.3\% of the trajectories did not reach zero in buffer $Y$ by time $V$, and so are rejected. 

Note that as we assume positive recurrence for our system, the probability of Buffer~2 returning to its initial level zero is $1$. However, the time this takes might be longer than the time window of our simulation, ${V} = 10000$, which means the trajectories that we have terminated at time ${V}$ must eventually return to zero with probability $1$.

Let $\tau := \inf \{t > 0: Y_t = 0\}$ be the first time $Y_t$ returns to zero. For each retained simulated trajectory, we record the values of $X_\tau$ and $\varphi_\tau$. For the states~$10$ (the first input being \textsc{on}, the second being \textsc{off}) and~$00$ (\textsc{off}-\textsc{off}), we present in Figure~\ref{fig:PSI_Cummu_Compare} the cumulative distributions of $X_\tau$, $\mathds{P}[X_{\tau} \leq x, Y_{\tau} = 0, \varphi_\tau = i]$, with $i = 00, 10$, as determined by the simulations as well as by a piecewise linear DG approximation constructed from the approximating matrix $\uppsi$ of operator $\Psi$. \bleu{In this piecewise linear DG approximation, we consider the approximation interval $[0, \mathcal{I}] = \rouge{[0,16]}$, and the parameters of the stencil~$\boldsymbol{\omega}_{\rouge{K}, h,\Delta_h}$, defined in~\eqref{eqn:stencil}, take the following values: $K = 43,  h = 0.4$ and $\Delta_h = 0.001$. The boundary meshes each have a constant basis function of value $1$; each $k$th interior mesh $\mathcal{D}_k := \rouge{[x_k, x_{k + 1}]}$ has two piecewise linear basis functions 
	\begin{align*}
		 \phi_0^{i}(x) := -x + \rouge{\frac{x_{k + 1}}{x_{k + 1} - x_k}}, \quad \phi_1^2(x) := x - \rouge{\frac{x_k}{x_{k + 1} - x_k}},
	\end{align*}
	for $x \in \mathcal{D}_k$ and $\rouge{k = 2, \ldots, K - 2}$.} As can be seen in Figure~\ref{fig:PSI_Cummu_Compare}, these two distributions are closely matched.


\begin{center}
	\includegraphics[width=\textwidth]{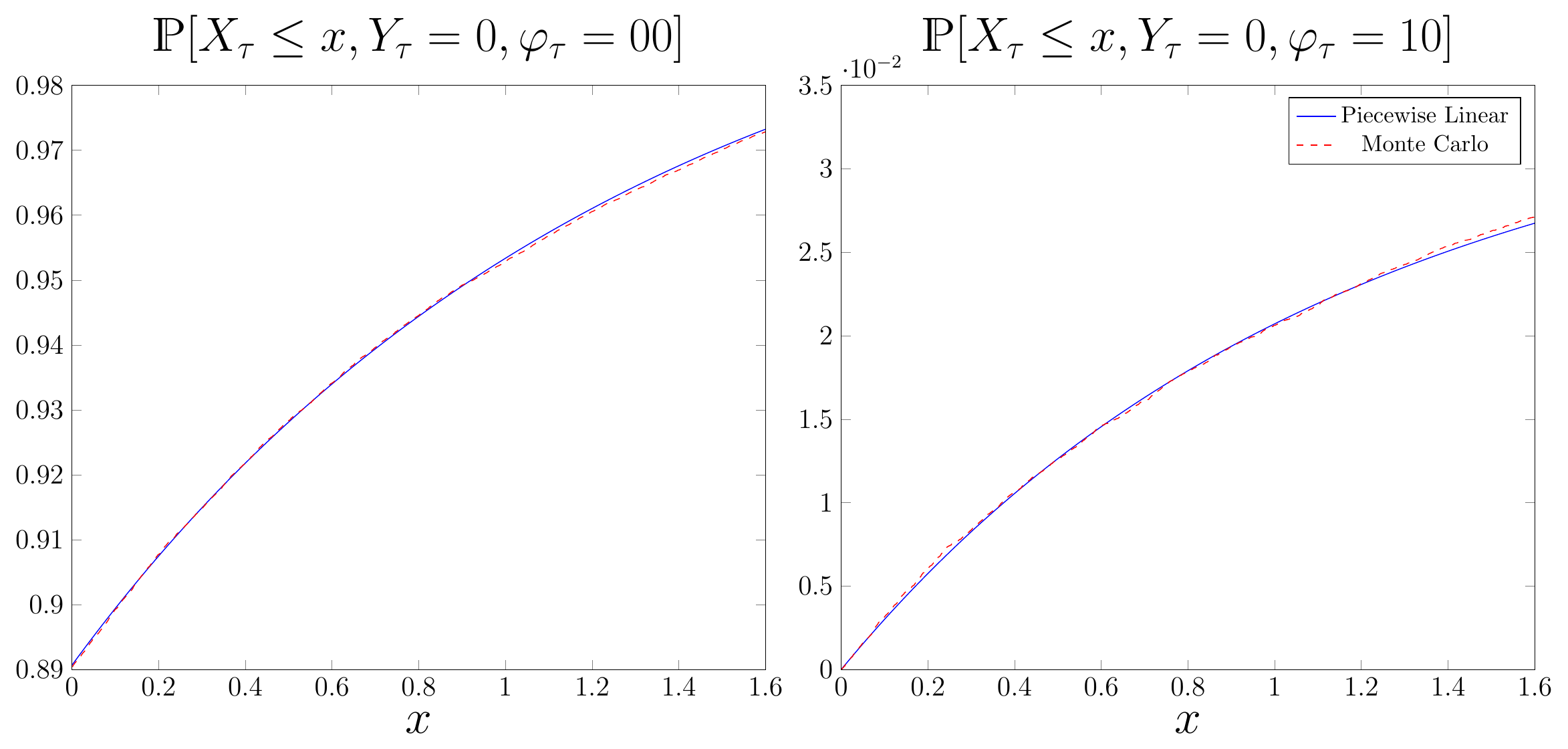} 
      \captionof{figure}{The {\textsc{off}-\textsc{off} and \textsc{on}-\textsc{off}} state cumulative distributions of the level $X_{\tau}$ with our chosen initial condition. In each plot, the \emph{solid blue} line is the piecewise linear DG approximation, and the \emph{dashed red} line is the empirical cumulative distribution at time $\tau$ of the stochastic process $\{X_t, Y_t, \varphi_t\}$, obtained from the retained simulations {and kernel density estimation}. The DG approximations appear to follow very closely the empirical cumulative distributions. \label{fig:PSI_Cummu_Compare}} 	
\end{center}


%


\subsection{The marginal density of $\{X_t, \varphi_t\}$}  
	\label{subsec:vmd}

Since Buffer~1, $X_t$, is independent of Buffer~2, $Y_t$, we can use results from the existing literature on stochastic fluid flows to obtain the marginal limiting density $\bs{\chi}(x) = (\chi_i(x))_{i \in \mathcal{S}}$ of $X_t$: 
	\begin{align*} 
		\chi_i(x) :=  \frac{\partial}{\partial x} \lim_{t \rightarrow \infty} \mathds{P}[X_t \leq x, \varphi_t = i].
	\end{align*} 
	
	On the other hand, we can use the operator $\Psi$ to compute, based on Theorem~\ref{theo:density}, the joint limiting density $\pi_i(\mathcal{A},y)$, where 
	\begin{align*}
		\pi_i(y)(\mathcal{A}) :=  \frac{\partial}{\partial y} \lim_{t \rightarrow \infty} \mathds{P}[X_t \in \mathcal{A}, Y_t \leq y, \varphi_t = i].
	\end{align*} 
	
	Thus, we can approximate the joint limiting density $\pi_i(y)(\mathcal{A})$, via a discontinuous Galerkin approximation $\uppsi$ of $\Psi$, and consequently form an approximation of the marginal limiting density $\bs{\chi}(x)$. In particular, we evaluate an approximation of $\pi_i(y)([0,x))$, and then integrate the approximating function over the domain of $y$, and finally differentiate with respect to $x$; note that  
	\begin{align*} 
		\frac{\partial}{\partial x} \int_{0}^{\infty} \pi_i(y)([0,x)) \ud y = \frac{\partial}{\partial x} \lim_{t \rightarrow \infty} \mathds{P}[X_t \in [0,x), Y_t \leq \infty, \varphi_t = i].
	\end{align*} 
	
	Let two vectors $\overline{\bs{\chi}}$ and $\widehat{\bs{\chi}}$ denote respectively the piecewise constant and piecewise linear DG approximations of $\boldsymbol{\chi}$, obtained via the corresponding approximations $\uppsi$. We use $\bs{\omega}_{\rouge{43},0.4,0.001}$ as our stencil for the DG approximation and the nodal points at which we evaluate the analytical density function~$\boldsymbol{\chi}$. Define 
	\begin{align*} 
		\chi_{\textsc{on}}(x) & := \frac{\partial}{\partial x} \lim_{t \rightarrow \infty}  \mathds{P}\left[X_t \leq x, \varphi_t \in \{10, 11\}\right] = \chi_{10}(x) + \chi_{11}(x), \\
		\chi_{\textsc{off}}(x) & := \frac{\partial}{\partial x} \lim_{t \rightarrow \infty} \mathds{P}\left[X_t \leq x, \varphi_t \in \{01, 00\}\right] = \chi_{01}(x) + \chi_{00}(x).	
	\end{align*}
	
	We present in Figure~\ref{fig:on_off_limit} the analytical density $\bs{\chi}$ at given nodal points, a piecewise constant DG approximation, and a piecewise linear DG approximation.
	
\begin{center}
	\includegraphics[width=\textwidth]{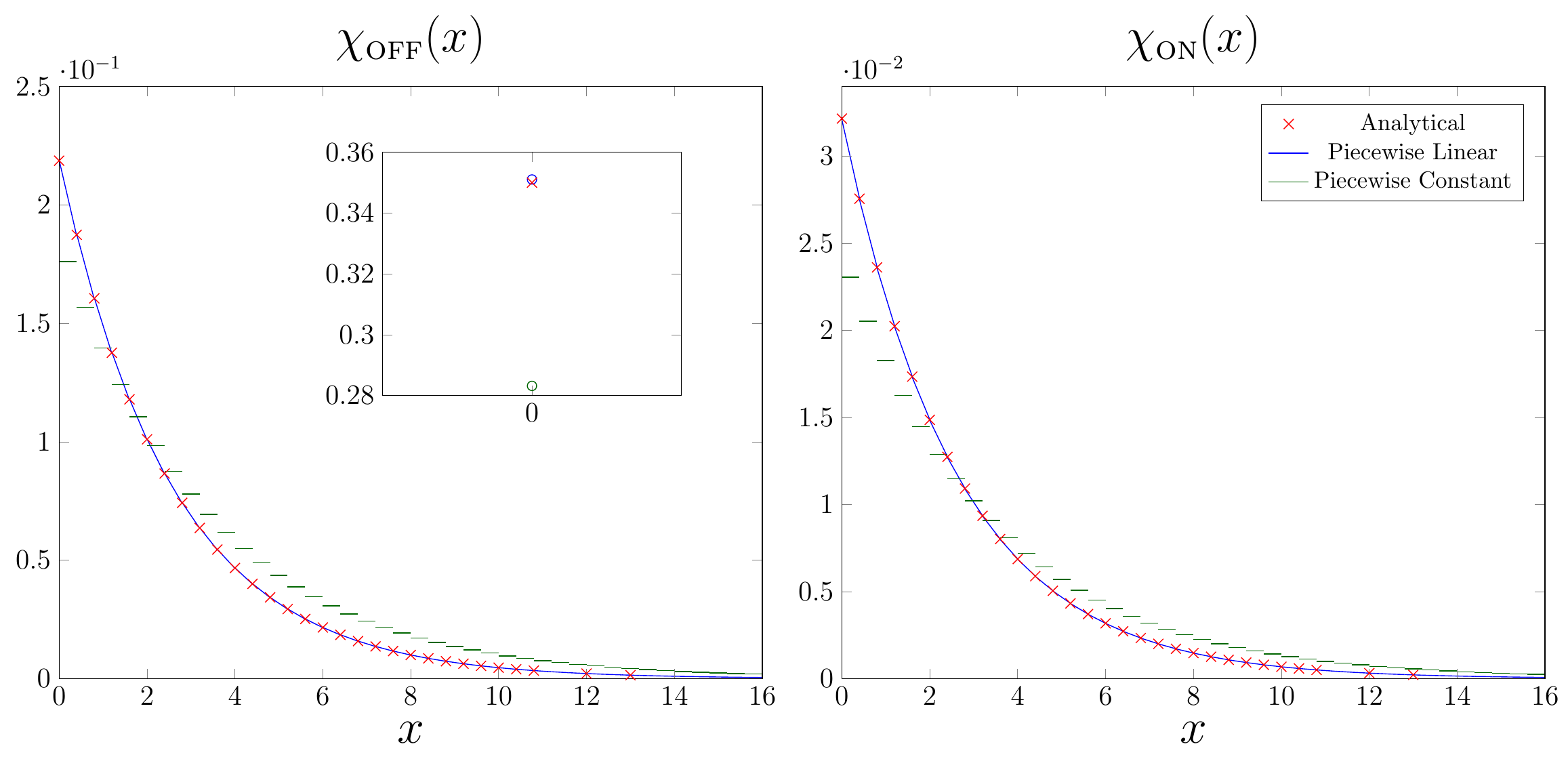}
       \captionof{figure}{Approximations of the stationary marginal densities $\chi_{\textsc{on}}$ and $\chi_{\textsc{off}}$ of $\{X_t\}$. In the left plot, the \emph{red crosses} are the values of the analytical solution $\chi_{\textsc{on}}$, evaluated at nodal points; the \emph{horizontal green} lines are the piecewise constant DG approximation $\overline{\chi}_{\textsc{on}}$; and the \emph{solid blue} line is a piecewise linear approximation $\widehat{\chi}_{\textsc{on}}$. In the right plot are the corresponding graphs for $\chi_{\textsc{off}}$. The subplot inside the left plot shows the value of the point mass of $X_t$ at $0$ and their DG approximations. The linear DG approximations fit through all analytical values to the visual eye. \label{fig:on_off_limit}}
\end{center} 
%



The approximations reconstruct the general shape of the density reasonably well: for both piecewise constant and piecewise linear, we see most of the probability being concentrated in the point mass at $X_t = 0$, and then decaying as $x$ increases. We observe from the subplot inside the left plot of Figure~\ref{fig:on_off_limit} that the piecewise constant approximation $\overline{\chi}_{\textsc{off}}$ underestimates the {point} mass, and redistributes the difference over the rest of the state space of $X_t$. Hence, there is more mass in the tails of the densities. On the other hand, the piecewise linear approximation $\widehat{\bs{\chi}}$ appears to be very close to the analytical solution. 

\subsection{Sensitivity analysis of the dynamics of $Y_t$}  
	\label{subsec:sensitivity}
To further confirm that the discontinuous Galerkin approximation $\uppsi$ of the operator $\Psi$ accurately captures the dynamics of $Y_t$, we vary the rates at which the input to this buffer switches \textsc{on} and \textsc{off} (denoted by $\alpha_2$ and $\beta_2$, respectively). As we modify these rates, we should see a change in the distribution of probability between
	\begin{align*} 
		\chi^{0}(x) := \frac{\partial}{\partial x} \lim_{t \rightarrow \infty}  \mathds{P}\left[X_t \leq x, Y_t = 0\right], 
	\end{align*} 
 the limiting marginal density of $X_t$ when $Y_t = 0$, and 
	\begin{align*}
		 \chi^{+}(x) :=  \frac{\partial}{\partial x} \lim_{t \rightarrow \infty}  \mathds{P}\left[X_t \leq x, Y_t > 0\right], 		 
	\end{align*} 
that of $X_t$ when $Y_t > 0$. On the other hand, the sum of these two densities should be identical in all the different meaningful scenarios of $\alpha_2$ and $\beta_2$; that is, the sum ${\chi}^{0}(x) + {\chi}^{+}(x)$ {should remain fixed and be} equal to the sum ${\chi}_{\textsc{on}}(x) + {\chi}_{\textsc{off}}(x)$, {for all $x$.}  

To that end, we keep our stencil $\bs{\omega}_{43,0.4, 0.001}$ and basis functions fixed, and compute the marginal limiting density for different values of $\alpha_2$ and $\beta_2$. The results coincide with what we expect from the dynamics in $Y_t$. 

\begin{table}[!h]
\centering
\begin{tabular}{|l c | c | c | c | }
\hline
  & & $\alpha_2 = 11,\ \beta_2 = 1$ &$\alpha_2 = 16,\ \beta_2 = 1$  & $\alpha_2 = 22,\ \beta_2 = 1$ \\
\hline
$\displaystyle \int_{[0,\mathcal{I}]} \widehat{\chi}^0(x) \ud x$ & & $\approx$ 0.0 & 0.184  &0.312  \\
\hline
$\displaystyle \int_{[0,\mathcal{I}]} \widehat{\chi}^{+}(x) \ud x$& & $\approx$ 1.0 & 0.816 &0.688  \\ 
\hline
\end{tabular}
\caption{The functions $\widehat{\chi}^0(x)$ and $\widehat{\chi}^{+}$ are piecewise linear DG approximations of the limiting marginal densities $\chi^0(x)$ and $\chi^{+}(x)$ over the stencil $\bs{\omega}_{\rouge{43}, 0.4,0.001}.$  \label{table:varying_rates}}
\end{table}

From Table~\ref{table:varying_rates}, we observe that as $\alpha_2$ (the rate at which the input for $Y_t$ switches \textsc{off}) increases, so does the probability of $Y_t$ being empty. Furthermore, even though there are different amounts of probabilities in the two marginal densities, $\widehat{\chi}^0(x)$ and $\widehat{\chi}^{+}(x)$, their sum remains the same as the sum of the marginal limiting densities $\widehat{\chi}_{\textsc{on}}(x)$ and $\widehat{\chi}_{\textsc{off}}(x)$ for all calculated values of $x$ (data not shown here). These numerical results indicate that the dynamics of $Y_t$ are captured well by the DG approximations. 



\subsection{Errors of approximation}  
	\label{subsec:errors}

In a discontinuous Galerkin approximation, we choose the smoothness of the basis functions {and} the level of spatial discretisation. Once a particular selection is made, we project our operators into a finite-dimensional linear operator space corresponding to these choices {(see Section~\ref{sec:numframe})}. It has been shown that operators such as $\mathbb{B}$ {(defined in Section~\ref{subsec:B_operators})} under a DG approximation have an error which converges at the order of $\mathcal{O}(h^{s}),$ where $h$ is the discretisation and $s$ is the degree of the basis~\cite{riviere2008discontinuous}.

 
However, this result cannot be easily translated across to the operator $\Psi.$ The DG approximation of the $\Psi$ operator is constructed by taking the DG approximation of the operators $\mathbb{B}$ and then solving the Riccati equation~\eqref{eqn:RiccatiPsi} using the approximate operators. Further, we then use this approximation of $\Psi$ to derive an approximation for the limiting density $\bs{\pi}$. With such a construction, it is not trivial to determine how the error propagates through the process of solving the Riccati equation, and then through further calculations to determine $\bs{\pi}$. Determining bounds for the approximation errors of $\Psi$ {and $\bs{\pi}$}, as functions of the discretisation and basis selection, is a topic for future research. 

As a preliminary step in this direction, we empirically investigate how the approximation error of the marginal limiting density of the fluid $X_t$~(see Section~\ref{subsec:vmd}) changes with respect to the choices of basis functions and the levels of discretisation. We begin by introducing our normed vector space in which we compare the different levels of discretisation and smoothness. Recall, from~\eqref{eqn:stencil}, that the left boundary mesh of our approximations is of length~$\Delta_h$, and all but two interior meshes are of length~$h$, with two compensating meshes of width $h - \Delta_h$. 
%
Let $[0,\mathcal{I}]$ be the interval on which we approximate our solution, then both the approximations and the analytical solution belong to the space $\mathcal{S} \times \mathcal{C}^{-1}([0,\mathcal{I}])$, where $\mathcal{C}^{-1}([0,\mathcal{I}])$ is the set of functions with countably many discontinuities. We choose the right boundary mesh to be a piecewise-constant function. Then, for any function $g: \mathcal{S} \times [0,\mathcal{I}] \mapsto \mathds{R}$, where $g(i, \cdot) \in  \mathcal{C}^{-1}([0, \mathcal{I}])$ for $i \in \mathcal{S}$, we define the \emph{star seminorm} as follows:
	  \begin{align}
	  	\label{eqn:Pdg}
	\Vert g \Vert_{\star} := \sum_{i\in \mathcal{S}}  \left\vert   \int_0^{\Delta_h} g(i,x) dx  \right\vert  + \int_{\Delta_h}^{ \mathcal{I} - \Delta_h }   \vert g(i,x) \vert dx +  \left\vert  \int_{ \mathcal{I} - \Delta_h}^{\mathcal{I}} g(i,x) dx  \right\vert.
	  \end{align}
%

%
%
Essentially, the star seminorm is an extension of the $L^1$ norm which incorporates our interpretation: that the left and right boundary meshes are treated as point masses. That is, we only study the total mass in the intervals $[0,\Delta_h]$ and $[\mathcal{I}-\Delta_h,\mathcal{I}]$ and not the distribution over them.
%

We conduct two numerical experiments. The first is to understand the effects of choosing piecewise-linear basis functions over piecewise-constant, the second experiment is to understand the effects of treating a point mass as a density on a short interval. 

In the first experiment, we choose $\Delta_h = 10^{-6}$ and  $\mathcal{I}=16$. We then consider the error between the approximation and the reference solution in the star seminorm. \rouge{The notations $\widehat{\chi}_{\bs{\omega}}$ and $\overline{\chi}_{\bs{\omega}}$ denote, respectively, a piecewise-linear approximation with stencil $\bs{\omega}$ and a piecewise-constant approximation with stencil $\bs{\omega}$.}

In the left plot in Figure~\ref{fig:discretisation_error}, we observe that the piecewise-constant approximation has an error that scales approximately $\mathcal{O}({h^{0.88}}),$ with respective to the mesh size~$h$, and the piecewise linear approximation has an error that scales approximately $\mathcal{O}({h^{1.84}})$. We observe that the coarsest piecewise-linear basis approximation has an error similar to that of the finest piecewise-constant approximation, which is two orders of magnitude finer.

In the second experiment, we fix $h = 1.0,$  $\mathcal{I} = 16$, and the basis functions to be piecewise-linear, and then we scale $\Delta_h$ and observe the trend in the star seminorm. The approximation error will plateau past the length where the error caused by $h$ and $\mathcal{I}$ dominate over the error gains of reducing the width of the boundary mesh. Hence, we subtract from this approximation error the approximation error of a finer approximation, $ \widehat{\chi}_{19,1.0,0.005}$. The right plot in Figure~\ref{fig:discretisation_error} shows that the error scales approximately $\mathcal{O}({\Delta_h^{1.7}})$.  
\begin{center}
	\begin{figure}[!h]
	 \includegraphics[width=\textwidth]{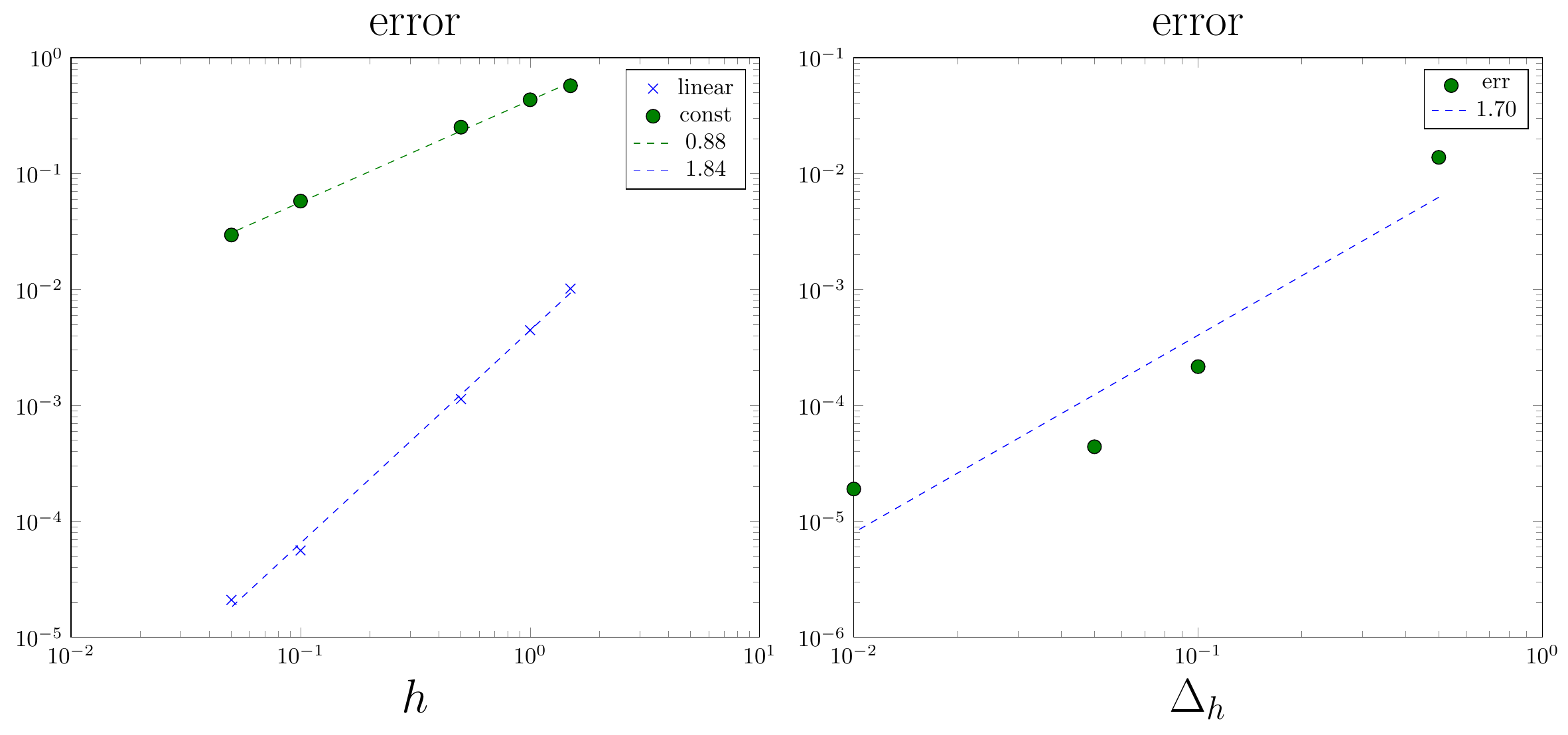}
	 %
	 \caption{Approximation errors of different mesh sizes. On the left, the \emph{green crosses}, approximated by the \emph{dashed green} line, represent the error $\Vert \chi -  \overline{\chi}_{h,10^{-6}} \Vert_\star$ of the piecewise constant approximations $\overline{\chi}_{h,10^{-6}}$ of different interior mesh sizes, $h$;  the \emph{blue crosses}, approximated by the \emph{solid blue} line, represent the error $\Vert \chi - \widehat{\chi}_{h,10^{-6}}  \Vert_{\star}$ of the piecewise linear approximations $\widehat{\chi}_{h,10^{-6}}$ for different interior mesh sizes, $h$.  The approximation error for a piecewise constant approximation is $\mathcal{O}\left(h^{0.88}\right)$, and the error for a piecewise linear approximation is $\mathcal{O}\left(h^{1.84}\right)$. On the right, the green crosses represent the error of the point mass for different boundary mesh sizes, $\Delta h$; note that  $\vert \Vert \chi - \widehat{\chi}_{K,1.0,\Delta h} \Vert_{\star} - \Vert \chi - \widehat{\chi}_{K,1.0,0.005} \Vert_{\star} \vert  = \mathcal{O}\left(\Delta_h^{1.7}\right)$.  \label{fig:discretisation_error}}
	\end{figure}
\end{center} 

The numerical experiments above indicate that an increase in the degree of the basis functions would result in an increase in the order of convergence of the error in the star seminorm. However, this experiment is not taking into consideration that by increasing the degree of basis functions we need to increase the number of basis (elements) in the mesh. For example, the piecewise-constant has one basis element in each mesh, while the piecewise-linear has two. With respect to storage, the overall number of elements in the entire stencil is increasing linearly with the order of the elements. 

In Table~\ref{table:times}, we give some computational statistics of the approximations used in Figure~\ref{fig:discretisation_error} to aid in the understanding of the trade-offs between memory, computation time, mesh size, and error. We observe for a mesh size $h = 0.5$ that the piecewise-constant approximation uses roughly half the number of elements and is twice as fast, compared to the piecewise linear approximation. 
However, the piecewise-linear approximation is two hundred times more accurate than the piecewise-constant approximation. 

In our particular examples, if we were given a restriction to use no more than a prescribed amount of elements, then choosing a larger mesh size with piecewise-linear elements would be a better strategy than choosing a smaller mesh size and using piecewise-constant elements. The generalisation of these observations and the exploration of higher order basis functions is the focus for future research.
\begin{table}[!h]
\centering
\begin{tabular}{| c || c ||  c | c | c | c | }
\hline
Basis & $h$ & Error & Comp. Times & \# of Elements & Overall Storage \\
\hline
\multirow{3}{4em}{\centering Piecewise Constant} & 1.5 & 0.58 & 0.21 sec & 88 & 0.5 MB \\
					& 0.5 & 0.25&0.31 sec & 248 & 4.1 MB \\
					& 0.05&0.03 & 23 sec & 2408 &380 MB \\ 
					\hline
\multirow{3}{4em}{\centering Piecewise Linear} & 1.5 & 0.01 & 0.21 sec & 168 & 2.1 MB \\
					& 0.5 & $1.1\times10^{-3}$ & 0.78 sec & 488 & 16 MB \\
					& 0.05 &  $2.1\times10^{-5}$ & 130 sec & 4816 & 1.5 GB  \\ 
\hline
\end{tabular}
\caption{ Computational times and storage comparisons between piecewise-linear and -constant approximations. 
Overall storage is the total storage of all the operators from \eqref{eqn:VB} to \eqref{eqn:mass}. 
The computations were performed on 2.5Ghz Intel Core i7 with 16GB of RAM running OSX 10.10.5. The code was implemented in python, using scientific python libraries. \label{table:times}}
\end{table}



\section{Conclusions} 
Finite Differences and Finite Volume methods have been used in the past to approximate operators that arise in stochastic processes. In principle, these methods approximate the operators by higher dimensional linear operators.
In these methods, intuitive notions of mass conservation and positivity are captured; however, regularity is lost, making highly regular probability distributions computationally intensive. 

We proposed the application of the discontinuous Galerkin method to approximate stochastic operators, with the intent that its ability to incorporate local regularity and maintain mass conservation, will lead to more accurate approximations and a reduction in computational effort. To demonstrate this, we applied the discontinuous Galerkin method to approximate all the operators needed to construct the joint stationary distribution of a stochastic fluid-fluid process. 

The numerical results showed that the approximation of the stationary distribution arising from DG approximations of the operators is accurate and effective. We also verified that the operators and their dynamics were captured accurately. Furthermore, in our example, we observed that adding more regularity in the basis functions led to a significant decrease in computational effort. The DG method also enabled us to obtain other performance measures of stochastic fluid-fluid processes that are also analytically presented by operators.

Future work includes determining error bounds for the approximations of the operator $\Psi$ as well as of the stationary distribution in general, and a more thorough investigation of the computational effort as higher-order basis functions are used. 

\section*{Acknowledgements}
The authors acknowledge the financial support of the
Australian Research Council (ARC) through the Discovery Grants DP110101663 and DP180103106. Bean, Nguyen, and O'Reilly also acknowledge the support of ACEMS (ARC Centre
of Excellence for Mathematical and Statistical Frontiers).

\bibliography{stochff}

\begin{thebibliography}{1}
\expandafter\ifx\csname url\endcsname\relax
  \def\url#1{\texttt{#1}}\fi
\expandafter\ifx\csname urlprefix\endcsname\relax\def\urlprefix{URL }\fi
\expandafter\ifx\csname href\endcsname\relax
  \def\href#1#2{#2} \def\path#1{#1}\fi

\bibitem{mz2012}
M.~Miyazawa, B.~Zwart, Wiener-{H}opf factorizations for a multidimensional
  {M}arkov additive process and their applications to reflected processes,
  Stochastic Systems 2 (2012) 67--114.

\bibitem{bo2014}
N.~G. Bean, M.~M. O'Reilly, The stochastic fluid-fluid model: A stochastic
  fluid model driven by an uncountable-state process, which is a stochastic
  fluid itself, Stochastic Processes and their Applications 124 (2014)
  1741--1772.

\bibitem{neuts81}
M.~Neuts, Introduction to Matrix Analytic Methods in Stochastic Modeling, The
  John Hopkins University Press, 1981.

\bibitem{lr99}
G.~Latouche, V.~Ramaswami, Introduction to matrix analytic methods in
  stochastic modeling, ASA-SIAM Series on Statistics and Applied Probability,
  SIAM, Philadelphia PA, 1999.

\bibitem{c99}
B.~Cockburn, Discontinous {G}arlekin methods for convection-dominated problem,
  in: {H}igher-{O}rder {M}ethods for {C}omputational {P}hysics, Vol.~9 of
  Lecture Notes in Computational Science and Engineering, Springer Verlag,
  1999.

\bibitem{lnp13}
G.~Latouche, G.~T. Nguyen, Z.~Palmowski, Two-dimensional fluid queues with
  temporary assistance, Vol.~27 of Springer Proceedings in Mathematics \&
  Statistics, Springer Science, New York, NY, 2013, Ch.~9, pp. 187--207.

\bibitem{b2003}
B.~Cockburn, Discontinous galerkin methods, Z. Angew. Math. Mech. 83 (2003)
  731--754.

\bibitem{bot08}
N.~G. Bean, M.~M. O'Reilly, P.~G. Taylor, Algorithms for the
  {L}aplace-{S}tieltjes transforms of first return times for stochastic fluid
  flows, Methodology and Computing in Applied Probability 10 (2009) 381--408.

\bibitem{riviere2008discontinuous}
B.~Riviere, Discontinuous {G}alerkin methods for solving elliptic and parabolic
  equations: {T}heory and implementation, SIAM, 2008.

\end{thebibliography}
\bibliographystyle{abbrv}

\end{document}